\newcommand{\EEQ}{\end{equation}}
\newcommand{\rfb}[1]{\mbox{\rm
   (\ref{#1})}\ifx\undefined\stillediting\else:\fbox{$#1$}\fi}
                         \newcommand{\ud}     {{\rm d}}
\newcommand{\bt}{\begin{Theorem}}
\newcommand{\et}{\end{Theorem}}
\newcommand{\br}{\begin{remark}}
\newcommand{\er}{\end{remark}}
\newcommand{\bc}{\begin{Corollary}}
\newcommand{\ec}{\end{Corollary}}
\newcommand{\el}{\end{Lemma}}
\newcommand{\bd}{\begin{definition}}
\newcommand{\ed}{\end{definition}}
\newcommand{\R}  {\mathbb{R}}
\def\b{\beta}
\def\o{\omega}
\newcommand{\mm}    {{\hbox{\hskip 0.5pt}}}
\newcommand{\bluff} {{\hbox{\raise 15pt \hbox{\mm}}}}
\newfont{\Blackboard}{msbm10 scaled 1200}
\newfont{\roma}{cmr10 scaled 1200}
\def\CC{\rm \hbox{C\kern-.56em\raise.4ex
         \hbox{$\scriptscriptstyle |$}\kern+0.5 em }}
\newtheorem{corollary}{Corollary}[section]
\newtheorem{definition}[corollary]{Definition}
\newtheorem{proposition}[corollary]{Proposition}
\newtheorem{remark}[corollary]{Remark}
 \numberwithin{equation}{section}
\newtheorem{prop}{Proposition}[section]
\newtheorem{thm}{Theorem}[section]
\def\ds{\displaystyle}
\newcommand{\re}{\mathrm{Re}}
\newcommand{\e}{\mathrm{e}}
\begin{document}
\thispagestyle{empty}
  
\title[Stability for coupled waves with locally disturbed Kelvin-Voigt damping]{Stability for coupled waves with locally disturbed Kelvin-Voigt damping}

\author{ Fathi Hassine}
\address{UR Analysis and Control of PDEs, UR 13ES64, Department of Mathematics, Faculty of Sciences of Monastir, University of Monastir, Tunisia}
\email{fathi.hassine@fsm.rnu.tn}

\author{Nadia Souayeh}
\address{UR Analysis and Control of PDEs, UR 13ES64, Department of Mathematics, Faculty of Sciences of Monastir, University of Monastir, Tunisia}
\email{nadia.souayeh@fst.utm.tn}

\begin{abstract}
We consider a coupled wave system with partial Kelvin-Voigt damping in the interval $(-1,1)$, where one wave is dissipative and the other does not. 
When the damping is effective in the whole domain $(-1,1)$ it was proven in \cite{H.P} that the energy is decreasing over the time with a rate equal to $t^{-\frac{1}{2}}$. In this paper, using the frequency domain method we show the effect of the coupling and the non smoothness of the damping coefficient on the energy decay. Actually, as expected we show the lack of exponential stability, that the semigroup loses speed and it decays polynomially with a slower rate then given in \cite{H.P}, down to zero at least as $t^{-\frac{1}{12}}$. 
\end{abstract}    

\subjclass[2010]{35B35, 35B40, 93D20}
\keywords{Coupled system, Kelvin-Voigt damping, frequency domain approach}

\maketitle

\tableofcontents
  
\section{Introduction}
When a vibrating source disturbs the first particular of a medium, a wave is created. This phenomena begins to travel from particle to particle along the medium, which is typically modelled by a wave equation. In order to suppress those vibrations, the most common approach is adding damping. It's more likely  to use one of two types:
\\
\hspace*{0.2cm} 1) The linear viscous damping or "external damping", it does mostly model an external frictional force, such that the auto-mobile shock absorber.
\\
\hspace*{0.2cm} 2) The Kelvin-Voigt damping, it's also called the "internal damping" or the "material damping", which is originated from  the extension or compression of the vibrating particles.
\\
In the recent years, many researchers showed interest in problems involving this kind of damping. In control theory for instance it was shown that when the Kelvin-Voigt damping coefficient is satisfying some geometrical control conditions the semigroup corresponding to this system is exponential stable (see \cite{liu-rao2,tebou}). Nonetheless, when the damping is  arbitrary localized with singular coefficient, it's not the case any more (see \cite{AHR2,liu-rao1}). Actually, in one-dimensional case we can consider the following problem
\begin{equation} \label{intro}
\left\lbrace\begin{array}{ll}
u_{tt}-[u_{x}(x,t)+b(x)u_{xt}]_x=0 &-1<x<1,\; t\geq0,
\\
u(t,-1)=u(t,1)=0& t\geq 0,
\\
u(0,x)=u_0(x) , u_t(0,x)=u_1(x) &-1\le x \le 1,
\end{array} \right.
 \end{equation}
with $b \in L^{\infty}(-1,1)$ And 
$$
b(x)= \left\{ \begin{array}{ll}
0 &\text{for }  x\in [0,1)
\\
a(x)&\text{for } x\in (-1,0).
\end{array} \right.
$$
Under the assumption that the damping coefficient has a singularity at the interface of the damped and undamped regions, and behaves like $x^{\alpha}$ near the interface, it was proven by Liu abd Zhang \cite{liu-zhang} that the semigroup corresponding to the system is polynomially or exponentially stable and the decay rate depends on the parameter $\alpha\in(0,1]$. When $\alpha=0$, Liu and Rao \cite{liu-rao1} showed that system \eqref{intro} is polynomially stable with an order equal to $2$ where few years ago Liu and Liu \cite{liu-liu} proved the lack of the exponential stability.

When dealing with systems involving quantities described by several components, pretending to control or observe all the state variables it turns out that certain systems possess an internal structure that compensates the lack of control variables. Such a phenomenon is referred to as indirect stabilization or indirect control. For instance Alabo et al. did study in \cite{F.P} the coupled waves with partial frictional damping
\begin{equation*}
\left\lbrace \begin{array}{ll}
u_{tt}-\Delta u + \alpha v =0& x \in \Omega,\; t \geq 0,
\\
v_{tt}-\Delta v + \alpha u+ \beta v_t =0& x \in \Omega,\; t \geq 0,
\end{array} \right.
\end{equation*}
subjected to Dirichlet boundary conditions. It was proven then the semigroup corresponding to  this system is not exponentially stable, but it's polynomially with the rate $t^{\frac{-1}{2}}$. 
In 2016, Oquendo and Pacheco studied the wave equation with internal coupled terms where the Kelvin-Voigt damping is global  in one equation and the second equation is conservative. Although the damping is stronger than the frictional one, they had shown that the semigroup loses speed with a slower rate of $t^{-\frac{1}{4}}$. For this kind of coupled visco-elastic models we distinguish what is called the transmission problems which have been  intensively studied by the first author, Ammari and their collaborators in \cite{AHR2,hassine1,hassine2,hassine4,hassine3,ALS} (see also \cite{ammari-niciase}) where the systems studied in these papers are the wave or the plate equation or a coupled wave-plate equation. Assuming a non smooth and singular damping coefficient it was shown in these works a uniform and a non-uniform decay rates of the energy. In this work, we examine the behaviour of a coupled waves system with a partial Kelvin-Voigt damping, namely we consider the following system where the first wave is dissipative and the second one is conservative 
\medskip
\begin{equation}\label{sc}
\left\{
\begin{array}{ll}
u_{tt}(x,t) - [u_{x}(x,t) + a(x)u_{xt} (x,t)]_x + v_{t} (x,t) = 0& (x,t) \in(-1,1) \times (0,+\infty), 
\\
v_{tt} (x,t) - c \, v_{xx} (x,t) - u_{t}(x,t) = 0& (x,t) \in (-1,1) \times (0,+\infty), 
\\
u(0,t) = v(0,t) = 0, u(1,t) = v(1,t) = 0&\forall \, t > 0, 
\\
u(x,0) = u_0(x),  u_t (x,0) = u_1(x)& \forall \, x \in (-1,1), 
\\
v(x,0) = v_0(x), v_t (x,0) = v_1(x)& \forall \, x \in (-1,1),
\end{array}
\right.
\end{equation}
where $c >0$ and $a \in L^\infty (-1,1)$ is non-negative function. In this paper we assume that the damping coefficient is piecewise function in particular we suppose that $a$ have the following form $a  = d \, \mathbb{1}_{[0,1]}$, where $d$ is a strictly positive constant. Since the damping is singular, this system can be seen as a coupling of a conservative wave equation and a transmission wave equation.

The natural energy of (u,v) solution of \eqref{sc} at an instant $t$ is given by
$$
E(t)= \frac{1}{2} \int_{-1}^{1} \left(|u_t (x,t)|^2+|v_t(x,t)|^2 +|u_{x}(x,t)|^2 + c \, |v_x(x,t)|^2 \right)\,\ud x, \forall \, t > 0.
$$
Multiplying the first equation of \eqref{sc} by $\bar{u}$, the second by $\bar{v}$, integrating over (-1,1) and then taking the real part leads to
\begin{equation*}
E^\prime (t) = -\int_{-1}^1 a(x) \, |u_{xt}(x,t)|^2 \,\ud x,\;\forall\,t > 0.
\end{equation*}
Therefore, the energy is a non-increasing function of the time variable $t$. We show the lack of the exponential stability and prove that the semigroup corresponding to this system is polynomially stable for regular initial data and with a slower rate, down to $t^{-\frac{1}{12}}$. 

This paper is organized as follows. In section \ref{CWKVWP}, we prove that system \eqref{sc} is well-posed. In section \ref{CWKVSS}, we show that the energy of the system is the strong stability. In section \ref{CWKVLES}, we prove the lack of exponential stability. In section \ref{CWKVPS}, we prove a polynomial stability decay of the energy.
\section{Well-posedness}\label{CWKVWP}
In this section, we discuss the well-posesness of the problem (\ref{sc}) using the semigroup theory.
\medskip
Let $\mathcal{H} = (H_{0}^{1}(-1.1))^2 \times( L^2(-1,1))^2$
be the Hilbert space endowed with the inner product define, 
for $U_1 =(u^1,v^1,w^1,z^1) \in \mathcal{H}$ and $ U_2 =(u^2,v^2,w^2,z^2) \in \mathcal{H}$, by
$$ 
\left\langle U_1,U_2\right\rangle _{\mathcal{H}}= \left\langle
u^1_x,u^2_x\right\rangle _{L^{2}(-1,1)} + \left\langle
\sqrt{c} v^1_x,\sqrt{c} v^2_x \right\rangle _{L^{2}(-1,1)}
+ \left\langle w^1,w^2\right\rangle _{L^{2}(-1,1)}
+\left\langle
z^1,z^2\right\rangle_{L^{2}(-1,1)}.
$$

\medskip

By setting $y(t) = (u(t),v(t),u_t(t),v_t(t))$ and $y_0=(u_0, v_0, u_1, v_1)$ we can rewrite system (\ref{sc}) as a first order differential equation as follow
\begin{equation}\label{damped}
\dot{y}(t) =  {\mathcal A} y(t), \qquad y(0) = y_0,
\end{equation}
where
$$
{\mathcal A}(u^1,v^1,u^2,v^2) = \left(u^2, v^2, \left(u_x^1 + au_x^2 \right)_x - v^2, c \, v^1_{xx} + u^2 \right),
$$
with
\begin{align*}
(u^1,v^1,u^2,v^2) \in {\mathcal D}({\mathcal A}) = \big\{(u^1,v^1,u^2,v^2) \in \mathcal{H}, \, (u^2, v^2) \in (H^1_0(-1,1))^2,
\\
v^1 \in H^2(-1,1) \cap H^1_0 (-1,1), \left(u^1_x + a u^2_x \right)_x \in L^2(-1,1) \big\}. 
\end{align*}
For the well-posedness of system (\ref{damped}) we have the following proposition:
\begin{proposition}\label{exist} 
For an initial datum $y_0 = (u_0,v_0,u_1,v_1) \in \mathcal{H}$, there exists a unique solution $y = (u,v,u_t,v_t) \in C([0,\,+\infty),\, \mathcal{H})$
to  problem (\ref{damped}). Moreover, if $y_0 \in \mathcal{D}(\mathcal{A})$, then
$$
y = (u,v,u_t,v_t) \in C([0,\,+\infty),\, \mathcal{D}(\mathcal{A}))\cap C^{1}([0,\,+\infty),\, \mathcal{H}).
$$ 
\end{proposition}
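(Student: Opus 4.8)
The plan is to invoke the Lumer--Phillips theorem: it suffices to show that $\mathcal{A}$ is a densely defined dissipative operator for which $\mathrm{Ran}(\mathrm{Id}-\mathcal{A})=\mathcal{H}$, since then $\mathcal{A}$ generates a $C_0$-semigroup of contractions and all the conclusions follow from standard semigroup theory. Density of $\mathcal{D}(\mathcal{A})$ in $\mathcal{H}$ is routine. For dissipativity I would take $U=(u^1,v^1,u^2,v^2)\in\mathcal{D}(\mathcal{A})$, expand $\langle\mathcal{A}U,U\rangle_{\mathcal{H}}$ term by term, and integrate by parts in the two second-order contributions $\langle(u^1_x+au^2_x)_x,u^2\rangle$ and $c\,\langle v^1_{xx},v^2\rangle$, whose boundary terms vanish because $u^2,v^2\in H^1_0(-1,1)$. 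The key observation is that every resulting term pairs with its conjugate and is therefore purely imaginary, except for the genuinely dissipative one; in particular the two coupling contributions $-\langle v^2,u^2\rangle$ and $\langle u^2,v^2\rangle$ add up to a purely imaginary quantity, so that
\[
\re\langle\mathcal{A}U,U\rangle_{\mathcal{H}}=-\int_{-1}^{1}a(x)\,|u^2_x(x)|^2\,\ud x\le 0,
\]
because $a\ge 0$. This proves that $\mathcal{A}$ is dissipative.

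For the range condition I would fix $F=(f^1,f^2,g^1,g^2)\in\mathcal{H}$ and solve $(\mathrm{Id}-\mathcal{A})U=F$. The first two components give $u^2=u^1-f^1$ and $v^2=v^1-f^2$; substituting these into the last two components eliminates $u^2,v^2$ and reduces the problem to the coupled elliptic system $u^1+v^1-\big((1+a)u^1_x\big)_x=\tilde g^1$ and $-u^1+v^1-c\,v^1_{xx}=\tilde g^2$ for $(u^1,v^1)$, where $\tilde g^1,\tilde g^2\in H^{-1}(-1,1)$ are built from $F$ (the term $\tilde g^1$ carrying the distributional contribution $(af^1_x)_x$, which is harmless since it is tested only against $H^1_0$ functions). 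I would solve this by the Lax--Milgram lemma on $\mathcal{V}=(H^1_0(-1,1))^2$, with the bilinear form obtained by testing the first equation against $\bar\phi$ and the second against $\bar\psi$. Boundedness of the form is clear from $a\in L^\infty(-1,1)$, and coercivity follows from the very same cancellation as above: taking $(\phi,\psi)=(u^1,v^1)$, the cross terms $\int v^1\bar u^1-\int u^1\bar v^1$ are purely imaginary, so the real part of the form controls $\|u^1\|_{H^1}^2+\|v^1\|_{H^1}^2$ since $1+a\ge 1$ and $c>0$.

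Once $(u^1,v^1)\in\mathcal{V}$ is produced, I would set $u^2=u^1-f^1$, $v^2=v^1-f^2\in H^1_0(-1,1)$ and read the remaining regularity directly off the equations: the second equation gives $c\,v^1_{xx}=v^1-u^1-\tilde g^2\in L^2(-1,1)$, so $v^1\in H^2\cap H^1_0$, while the first gives $(u^1_x+au^2_x)_x\in L^2(-1,1)$; hence $U\in\mathcal{D}(\mathcal{A})$ and $\mathrm{Ran}(\mathrm{Id}-\mathcal{A})=\mathcal{H}$. Lumer--Phillips then yields the contraction semigroup $(\e^{t\mathcal{A}})_{t\ge 0}$ and the two asserted statements. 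The point that requires care is precisely the non-smooth damping $a=d\,\mathbb{1}_{[0,1]}$: since $a$ jumps at $x=0$ one should not expect $u^1\in H^2$ across the interface, and the natural object is the \emph{flux} $u^1_x+au^2_x$, whose membership in $H^1$ encodes the transmission condition built into $\mathcal{D}(\mathcal{A})$. Keeping the argument at the level of the weak (divergence) formulation, rather than demanding $H^2$ regularity of $u^1$ near the interface, is what makes the surjectivity step go through.
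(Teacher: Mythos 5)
Your proof is correct and follows essentially the same route as the paper: Lumer--Phillips, dissipativity via integration by parts (using $u^2,v^2\in H^1_0(-1,1)$ so the boundary terms vanish and only $-\int_{-1}^1 a|u^2_x|^2\,dx$ survives in the real part), and Lax--Milgram for the range condition. The only difference is that you solve $(\mathrm{Id}-\mathcal{A})U=F$ directly, handling the resulting coupled elliptic system through the antisymmetry of the cross terms, whereas the paper solves $\mathcal{A}U=F$ (where the system decouples into two independent scalar problems) and then obtains surjectivity of $\lambda\mathrm{I}-\mathcal{A}$ for small $\lambda>0$ by a perturbation argument; both variants are valid and your remark about working at the level of the flux $u^1_x+au^2_x$ rather than $H^2$ regularity of $u^1$ is exactly the right point.
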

\begin{proof}
By Lumer-Phillips' theorem (see \cite{Pazy}), it suffices to show that $\mathcal{A}$ is dissipative and  maximal.
\medskip

(1) We first prove that $\mathcal{A}$ is dissipative. Take $Z = (u,v,w,z) \in \mathcal{D}(\mathcal{A})$. 
Then
\begin{align*}
\left\langle\mathcal{A}Z,Z\right\rangle_{\mathcal{H}} = \left\langle
w_x,u_x\right\rangle _{L^{2}(-1,1)} + c \, \left\langle
z_x, v_x \right\rangle _{L^{2}(-1,1)}
+ \left\langle (u_x + aw_x)_x,w\right\rangle _{L^{2}(-1,1)}
\\
+ \left\langle
c v_{xx} + w,z\right\rangle_{L^{2}(-1,1)}.
\end{align*}
By integration by parts and using the boundary conditions, it holds:

\begin{equation}\label{res1}
(\mathcal{A} Z,Z)_{\mathcal{H}} = - \left\langle a w_{x},w_{x}\right\rangle_{L^2(-1,1)}= - \int_{-1}^{1} a|w_x|^2 \, \ud x \leq 0.
\end{equation}
This shows that $\mathcal{A}$ is the dissipative.

\medskip
(2) Let us now prove that $\mathcal{A}$ is maximal, i.e., that
$\lambda I-\mathcal{A}$ is surjective for some $\lambda>0$. So, for any given $(f,g,f_1,g_1)\in\mathcal{H}$, we solve the equation $\mathcal{A}(u,v,w,z)=(f,g,f_1,g_1)$, which is recast on the following way
\begin{equation}\label{WPwave}
\left\{\begin{array}{l}
w = f 
\\
z = g   
\\
u_{xx}  + (af_x)_x = f_1 + g 
\\
c \, v_{xx}  = g_1 - f.
\end{array}\right.
\end{equation}
It is well known that by Lax-Milgram's theorem the system \eqref{WPwave} admits a unique solution $(u,v) \in H_{0}^{1}(-1,1) \times H^1_0(-1,1)$. Moreover by multiplying the second and the third lines of (\ref{WPwave}) by $\overline{u}, \overline{v}$ respectively and integrating over $(-1,1)$ and using Poincar\'e inequality and Cauchy-Schwarz inequality we find that there exists a constant $C>0$ such that
$$
\int_{-1}^1 \left(|u_x (x)|^{2} + |v_x (x)|^2 \right) \, dx \leq C \, \int_{-1}^1 \left(|f_x (x)|^{2} + |g_x (x)|^2 + |f_1(x)|^2 + |g_1(x)|^{2} \right)\, dx.
$$
It follows that $(u,v,w,z) \in\mathcal{D}(\mathcal{A})$ and we have
$$
\|(u,v,w,z)\|_{\mathcal{H}}\leq C\|(f,g,f_1,g_1)\|_{\mathcal{H}}.
$$
This imply that $0\in\rho(\mathcal{A})$ and by contraction principle, we easily get $R(\lambda\mathrm{I}-\mathcal{A})=\mathcal{H}$ for sufficient small $\lambda>0$. The density of the domain of $\mathcal{A}$ follows from \cite[Theorem 1.4.6]{Pazy}. Then thanks to Lumer-Phillips Theorem (see \cite[Theorem 1.4.3]{Pazy}), the operator $\mathcal{A}$ generates a $C_{0}$-semigroup of contractions on the Hilbert $\mathcal{H}$ denoted by $(e^{t \mathcal{A}})_{t \geq0}$. 

\medskip 
\end{proof}  
\section{Strong stability}\label{CWKVSS}
\begin{thm} \label{th1}
The semigroup $(e^{t \mathcal{A}})_{t \geq 0}$ is strongly stable in the energy space $\mathcal{H}$ i.e.,
$$
\lim_{t\to+\infty}\|e^{t \mathcal{A}}y_{0}\|=0,\qquad \forall\,y_{0}\in\mathcal{D}(\mathcal{A}).
$$
\end{thm}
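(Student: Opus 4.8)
The plan is to deduce the strong stability from the classical theorem of Arendt and Batty: since $(\e^{t\mathcal{A}})_{t\ge0}$ is a contraction semigroup on the Hilbert space $\mathcal{H}$, it is strongly stable as soon as $\mathcal{A}$ has no eigenvalue on the imaginary axis and $\sigma(\mathcal{A})\cap i\mathbb{R}$ is countable. I would in fact prove the stronger assertion $i\mathbb{R}\subset\rho(\mathcal{A})$, which makes the countability requirement vacuous. Recall that $0\in\rho(\mathcal{A})$ has already been established in Proposition \ref{exist}, so only the points $i\beta$ with $\beta\in\mathbb{R}\setminus\{0\}$ need to be handled.

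First I would show that $\mathcal{A}$ has no purely imaginary eigenvalue. Assume $\mathcal{A}Z=i\beta Z$ with $Z=(u,v,w,z)\in\mathcal{D}(\mathcal{A})$ and $\beta\neq0$. Testing with $Z$ and taking the real part, the dissipation identity \eqref{res1} gives $\int_{-1}^{1}a\,|w_x|^2\,\ud x=0$, hence $w_x=0$ on $(0,1)$ where $a=d>0$. The eigenvalue relations read $w=i\beta u$, $z=i\beta v$, $(u_x+aw_x)_x-z=i\beta w$ and $c\,v_{xx}+w=i\beta z$; from $w=i\beta u$ and $\beta\neq0$ one gets $u_x=0$ on $(0,1)$, and the boundary condition $u(1)=0$ forces $u\equiv0$ on $[0,1]$. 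Feeding this back into the third relation yields $z\equiv0$, hence $v\equiv0$, on $[0,1]$. Since $v\in H^2\cap H^1_0$ and the flux $u_x+aw_x$ belongs to $H^1(-1,1)$, the quantities $u$, $v$, $v_x$ and $u_x+aw_x$ are continuous at the interface, so evaluating from the right gives $u(0)=u_x(0)=v(0)=v_x(0)=0$. On the undamped interval $(-1,0)$ the relations collapse to the linear system $u_{xx}+\beta^2u=i\beta v$ and $c\,v_{xx}+\beta^2v=-i\beta u$ with vanishing Cauchy data at $x=0$, whence uniqueness for linear ODEs forces $u\equiv v\equiv0$ on $(-1,0)$ as well, i.e. $Z=0$; the case $\beta=0$ is immediate from $\ker\mathcal{A}=\{0\}$, already contained in $0\in\rho(\mathcal{A})$.

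\medskip

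Next I would prove $i\beta\in\rho(\mathcal{A})$ for $\beta\neq0$ by contradiction through the frequency domain method. If $i\beta\in\sigma(\mathcal{A})$ then, since $\{\re\lambda>0\}\subset\rho(\mathcal{A})$, the point $i\beta$ is a boundary point of the spectrum and hence an approximate eigenvalue: there is a sequence $Z_n=(u_n,v_n,w_n,z_n)\in\mathcal{D}(\mathcal{A})$ with $\|Z_n\|_{\mathcal{H}}=1$ and $F_n:=(i\beta-\mathcal{A})Z_n\to0$ in $\mathcal{H}$. Taking the real part of $\langle F_n,Z_n\rangle_{\mathcal{H}}$ and using \eqref{res1} gives $\int_0^1 d\,|w_{n,x}|^2\,\ud x\to0$; combined with the first resolvent line $i\beta u_n-w_n\to0$ in $H^1_0$, this yields $w_{n,x}\to0$ and $u_{n,x}\to0$ in $L^2(0,1)$, i.e. the energy carried by $u$ on the damped subinterval tends to zero. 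The remaining task is to propagate this weak information to the whole of $\mathcal{H}$, contradicting $\|Z_n\|=1$: one exploits the transmission conditions at $x=0$ together with multiplier identities (testing the momentum equations against $x\,(u_{n,x}+a w_{n,x})$, against cut-off versions of $w_n$ and $z_n$, and against suitable primitives) to transfer the estimate across the interface into $(-1,0)$ and to capture the conservative pair $(v_n,z_n)$ through the coupling terms $\mp u_t$.

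\medskip

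The main obstacle is precisely this propagation. Because $a=d\,\mathbb{1}_{[0,1]}$ is discontinuous, the solution of the momentum equation loses regularity at the interface and the resolvent of $\mathcal{A}$ fails to be compact, so $i\mathbb{R}\subset\rho(\mathcal{A})$ cannot be inferred from injectivity alone and the approximate-eigenvector scheme is unavoidable. Two difficulties must be overcome simultaneously: transferring the control from the damped region $(0,1)$ across the non-smooth interface $x=0$ into the undamped region $(-1,0)$, where no dissipation is available; and recovering the conservative component $(v_n,z_n)$, which is driven only by the lower-order coupling $\pm u_t$, so that the bound on $u$ must be bootstrapped through the second equation before it can close the estimate on $v$. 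Carrying out the multiplier bookkeeping so that all interface and boundary terms are absorbed, with no compactness shortcut available, is where the real work lies.
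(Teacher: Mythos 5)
Your first step (no purely imaginary eigenvalues) matches the paper's Step 1 essentially verbatim: dissipation kills $w_x$ on $(0,1)$, hence $u\equiv w\equiv 0$ there, the coupling then kills $v,z$ on $(0,1)$, and the vanishing Cauchy data at $x=0$ together with ODE uniqueness finishes the undamped side. That part is fine.

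The genuine gap is in your second step, the proof that $i\beta\in\rho(\mathcal{A})$ for $\beta\neq0$. You reduce it to showing that $i\beta$ cannot be an approximate eigenvalue and then state that the decisive work --- propagating the estimate $w_{n,x}\to0$ on $(0,1)$ across the interface and recovering $(v_n,z_n)$ by multipliers --- remains to be done; you explicitly defer it ("where the real work lies"). As written this is a program, not a proof: without carrying out that multiplier bookkeeping (which in this paper is only done in Section \ref{CWKVPS}, and there it costs a loss of $\beta^{12}$), nothing excludes $i\beta$ from the approximate point spectrum. Moreover, your premise that "no compactness shortcut is available" is what leads you astray. The resolvent of $\mathcal{A}$ is indeed not compact, but the paper's Step 2 exploits compactness at the level of the \emph{stationary second-order system}: it introduces the operator $A(u,v)=(-u_{xx}-i\mu(au_x)_x+i\mu v,\,-cv_{xx}-i\mu u)$ from $(H_0^1(-1,1))^2$ to $(H^{-1}(-1,1))^2$, shows $A$ is an isomorphism (itself by a Fredholm argument), rewrites the resolvent equation as $(u,v)-\mu^2A^{-1}(u,v)=A^{-1}(F,G)$, and observes that $A^{-1}$ is compact on $(H_0^1)^2$ via the embeddings $H_0^1\hookrightarrow L^2\hookrightarrow H^{-1}$. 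Fredholm's alternative then converts the injectivity you already proved in Step 1 into surjectivity, with no multiplier estimates and no approximate-eigenvector sequence at all. You should either adopt this reduction or actually supply the interface/multiplier estimates you postponed; in the current state the surjectivity half of $i\mathbb{R}\subset\rho(\mathcal{A})$ is not established.
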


\begin{proof}
To show that the semigroup $(e^{t\mathcal{A}})_{t\geq 0}$ is strongly stable we only have to prove that the intersection of $\sigma(\mathcal{A})$ with $i\mathbb{R}$ is an empty set. Since the resolvent of the operator $\mathcal{A}$ is not compact (see \cite{liu-rao2}) but $0\in\rho(\mathcal{A})$ we only need to prove that $(i\mu I-\mathcal{A})$ is a one-to-one correspondence in the energy space $\mathcal{H}$ for all $\mu\in\mathbb{R}^{*}$. The proof will be done in two steps: In the first step we prove the injective property of $(i\mu I-\mathcal{A})$ and in the second step we prove the surjective property of the same operator.
\medskip
\subparagraph*{Step 1} Let $(u,v,w,z)\in\mathcal{D}(\mathcal{A})$ such that 
\begin{equation}\label{ICwave}
\mathcal{A}(u,v,w,z) = i\mu(u,v,w,z).
\end{equation}
or equivalently,
\begin{equation}\label{waveIC}
\left\{\begin{array}{ll}
w=i\mu u&\text{in } (-1,1),
\\
z=i\mu v&\text{in } (-1,1),
\\
(u_{x} + aw_{x})_{x} - z =i\mu w&\text{in } (-1,1),
\\
c \, v_{xx} + w = i\mu z&\text{in } (-1,1), 
\\
u(-1) = u(1) =0, \, v(-1) = v(1) = 0.
\end{array}\right.
\end{equation}
Then taking the real part of the scalar product 
of \eqref{ICwave} with $(u,v,w,z)$ we get
$$
\mathrm{Re}(i\mu\|(u,v,w,z)\|_{\mathcal{H}}^{2})=\mathrm{Re}\left\langle\mathcal{A}(u,v,w,z),(u,v,w,z)\right\rangle_{\mathcal{H}} = - d \int_0^1 |w_x|^{2} dx = 0.
$$
Which implies that
\begin{equation*}
w_{x} =0 \qquad \text{ in }\,(0,1).
\end{equation*}
This implies that from the first equation \eqref{waveIC} that
\begin{equation*}
u_{x} =0 \qquad \text{ in }\,(0,1),
\end{equation*}
which means that $u$ is a constant in $(0,1)$ and since $u(1)=0$ we obtain that
\begin{equation*}
u=w=0 \qquad \text{ in }\,(0,1),
\end{equation*}
Hence, from the third and the second equation of \eqref{waveIC} one gets
\begin{equation}\label{DCwave}
u=w=v=z=0 \qquad \text{ in }\,(0,1),
\end{equation}
Using \eqref{DCwave} then \eqref{waveIC} is reduced to the following problem
\begin{equation}\label{waveIC1}
\left\{\begin{array}{ll}
w=i\mu u&\text{in } (-1,0),
\\
z=i\mu v&\text{in } (-1,0),
\\
\mu^{2}u + u_{xx} - i \mu v =0&\text{in } (-1,0),
\\
\mu^2 v + c \, v_{xx} + i \mu u = 0&\text{in } (-1,0), 
\\
u(-1) = u(0) =0, \, v(-1) = v(0) = 0.&
\end{array}\right.
\end{equation}
\medskip
Let $y=(u,v,u_x,v_x)$ and $y_x=(u_x,v_x,u_{xx},v_{xx})$ then \eqref{waveIC1} is recast as follow
\begin{equation} \label{Cwave3}
\left\lbrace \begin{array}{cc}
y_x=A_{\mu}y & \text{in} (-1,0)
\\
Y(0)=0
\end{array} \right.
\end{equation}
where  
$$
A_{\mu}=
\begin{pmatrix}
 \begin{array}{cccc}
 0 & 0 & 1  & 0
\\
0  & 0  & 0  & 1  
\\
-\mu ^2  & i\mu  & 0   & 0
\\
-i \frac{ \mu}{c}   & -\frac{ \mu^2}{c}  & 0  & 0
\end{array} 
 
\end{pmatrix}.
$$
Since $A_{\mu} $ is a bounded operator then the unique solution of \eqref{Cwave3} is $y=0$ therefore $u=v=0$ in $(-1,0)$. Moreover, from the fist and the second equation of \eqref{waveIC1} we have $w=z=0$ in $(-1,1)$. Combining all this with\eqref{DCwave}, we deduce that $u=v=w=z=0$ in $(-1,1)$. This conclude the fist part of this proof. 
\medskip
\subparagraph*{Step 2} Now given $(f,g)\in\mathcal{H}$, we solve the equation 
$$
(i\mu I-\mathcal{A})(u,v,w,z)=(f,g,f_1,g_1).
$$
Or equivalently,
\begin{equation}\label{Swave}
\left\{\begin{array}{llcc}
w=i\mu u-f \\
z =i \mu v - g \\
\mu^{2}u + u_{xx} + i\mu\,(a u_x)_x - i \mu v = (a f_x)_x -i\mu f - f_1 - g=F 
\\
\mu^2 v + c \, v_{xx} + i \mu u = - \mu g + f - g_1=G.
\end{array}\right.
\end{equation}
Let's define the operator
\begin{align*}
\begin{array}{rrrl}
 A :& (H_0^{1}(-1,1))^2 &\longrightarrow &(H^{-1}(-1,1))^2
\\
&(u,v)&\longmapsto&(-u_{xx}-i\mu (au_x)_x+i\mu v , -cv_{xx} -i \mu u).
\end{array}
\end{align*}
First we are going to show that $A$ is an  isomorphism. For this purpose we consider the two operator $\tilde{A}$ and $C$ such that
$$
\begin{array}{rrrl}
\tilde {A}: &(H_0^{1}(-1,1))^2& \longrightarrow &(H^{-1}(-1,1))^2
\\
&(u,v)&\longmapsto&(-u_{xx}-i\mu (au_x)_x , -cv_{xx}),
\end{array}
$$
and $C$ such that $A = C+ \tilde{A}$.
\medskip
It's easy to show that $\tilde{ A}$ is an isomorphism, then we could rewrite $A = \tilde{A} (Id - \tilde{ A}^{-1} (-C))$.
\medskip
To begin with, thanks to  the compact embedding
$$
 H_0^{1}(-1,1)^2\hookrightarrow L^2(-1,1)^2 \; \hbox{and} \; L^2(-1,1)^2\hookrightarrow H^{-1}(-1,1)^2,
$$
we see that $\tilde{ A}^{-1} $ is a compact operator. Secondly, it's clear that  $C$ is a bounded operator, therefore,  thanks to  Fredholm’s alternative, we only need to prove that $(Id - \tilde{A}^{-1} (-C))$ is injective.
\medskip

Let $(u,v) \in (H_0^{1}(-1,1))^2 $ such that 
$(Id - \tilde{ A}^{-1} (-C))(u,v)=0$, which implies that 
$$(\tilde{ A} -(-C))(u,v)=0.$$
Or equivalently 
\begin{equation}\label{SCwave11}
\left\{\begin{array}{ll}
u_{xx} + i\mu\,(a u_x)_x - i \mu v = 0& \text{in }(-1,1)
\\
c \, v_{xx} + i \mu u = 0& \text{in }(-1,1)
\\
u(-1)=u(1)=0,\,v(-1)=v(1)=0.
\end{array}\right.
\end{equation}
Multiplying the first equation of \eqref{SCwave11} by $\bar{u}$ and the conjugate of the second by $v$, after integration over $(-1,1)$, it follows 
$$
- \int _{-1}^{1} \vert u_x \vert ^2 \ud x +c \int _{-1}^{1} \vert v_x \vert ^2 \ud x -i \mu \int _{-1}^{1} a \vert u_x \vert ^2 \ud x =0
$$
Next, by taking the imaginary part, we can deduce that $u_x=0$  in $(0,1)$ then $u$ is constant in $(0,1)$ where with the boundary condition $u(1)=0$ we have $u=0$ in (0,1). Moreover, using the second equation of \eqref{SCwave11} we obtain $v=0$ in $(0,1)$, which implies that \eqref{SCwave11} that
\begin{equation}\label{Swave111}
\left\{\begin{array}{ll}
u_{xx}  =i \mu v  & \text{in} (-1,1)
\\
 v_{xx} = -i \frac{\mu}{c} u  & \text{in } (-1,1)
\\
u(0)=u(-1)=0,\,v(0)=v(-1)=0  
\end{array}\right.
\end{equation}
Let $y=(u,v,u_x,v_x)$ and $y_x=(u_x,v_x,u_{xx},v_{xx})$, using the trace theorem  we have:
\begin{equation*}
\left\lbrace \begin{array}{ll}
y_x=D_{\mu}y&\text{in }(-1,0)
\\
y(0)=0,&
\end{array} \right.
\end{equation*}
where  
$$D_{\mu} =
\begin{pmatrix}
 \begin{array}{cccc}
 0&0&1&0\\
0& 0& 0& 1  \\
0&i\mu& 0& 0\\
-i\frac{ \mu^2}{c}&0&0&0
\end{array} 
 
\end{pmatrix}. $$
With a same approach as in the first step, we can have the result that we are looking for (i.e. A is an isomorphism).

\medskip

Now, rewriting the third and the fourth lines of \eqref{Swave} one gets 
\begin{equation*}
(u,v)-\mu ^2 A^{-1} (u,v)=A^{-1}(F,G).
\end{equation*}
Let $(u,v) \in \ker(Id-\mu^2 A^{-1})$, i.e. $\mu ^2 (u,v)-A(u,v)=0$,  so we can see that:
\begin{equation}\label{sWAVE10}
\left\{\begin{array}{ll}
\mu^{2}u + u_{xx} + i\mu\,(a u_x)_x - i \mu v = 0&\text{in }(-1,1)
\\
\mu^2 v + c \, v_{xx} + i \mu u =0&\text{in }(-1,1).
\end{array}\right.
\end{equation}
Furthermore, multiplying the first equation of \eqref{sWAVE10} by $ \bar{u}$ and the conjugate of the second by $v$, after integration over $(-1,1)$ and taking the imaginary part, we deduce that
\begin{equation*}
\int _{-1}^{1} a \vert u_x \vert ^2 \ud x =d \int _{0}^{1} \vert u_x \vert ^2 \ud x =0.
\end{equation*}
So, we get the same system as in the first step (see \eqref{waveIC}). Thus, $\ker ( I-\mu^2 A^{-1})= \lbrace 0_{(H^{-1}(-1,1))^2} \rbrace$. 
\\
In another hand, thanks to the compact embeddings $ H_0^{1}(-1,1)^2\hookrightarrow L^2((-1,1))^2$ and $L^2(-1,1)^2\hookrightarrow H^{-1}(-1,1)^2$, we see that $A^{-1} $ is a compact operator. Now, thanks to the Fredholm's alternative, the operator ($Id-\mu^2 A^{-1}$) is bijective in $(H_0^{1}(-1,1))^2$.Finally, the equation \eqref{Swave} have a unique solution in $ H_0^{1}(-1,1)^2$.
This completes the proof.
\end{proof}
\section{Lack of exponential stability}\label{CWKVLES}
\medskip
Now, we prove the lack of  exponential stability given by the following theorem
\begin{thm}\label{th3}
The semigroup $(e^{t\mathcal{A}})_{t \geq 0}$, is not exponentially stable in the energy space provided that $c>1$ and that
\begin{equation}\label{thetaAssum}
\sin(2\sqrt{c}n\pi)\neq O(n^{-\frac{1}{2}}),
\end{equation}
\end{thm}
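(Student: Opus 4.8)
The plan is to use the Gearhart--Huang--Pr\"uss frequency-domain characterization of exponential stability for a contraction semigroup on a Hilbert space: $(e^{t\mathcal A})_{t\ge0}$ is exponentially stable if and only if $i\mathbb{R}\subset\rho(\mathcal A)$ and $\sup_{\mu\in\mathbb R}\|(i\mu I-\mathcal A)^{-1}\|_{\mathcal L(\mathcal H)}<\infty$. The first condition is already secured by Theorem~\ref{th1}, where it is shown that $\sigma(\mathcal A)\cap i\mathbb R=\emptyset$. Hence, to prove the \emph{lack} of exponential stability it suffices to violate the uniform resolvent bound: I would exhibit a sequence of real frequencies $\mu_n\to+\infty$ and data $F_n=(0,0,f_1^n,g_1^n)\in\mathcal H$ with $\|F_n\|_{\mathcal H}$ of order one, such that the solutions $U_n=(i\mu_n I-\mathcal A)^{-1}F_n$ obey $\|U_n\|_{\mathcal H}\to+\infty$. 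Equivalently, one normalizes $\|U_n\|_{\mathcal H}\asymp1$ and shows that the corresponding $\|F_n\|_{\mathcal H}=\|(i\mu_n I-\mathcal A)U_n\|_{\mathcal H}$ can be driven to zero along a subsequence.

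The construction must exploit the two structural weaknesses of the model: the second wave carries no damping at all, and the damping of the first wave vanishes on the undamped half $(-1,0)$. Writing $(i\mu I-\mathcal A)U=F$ as in \eqref{Swave} reduces matters to the second-order boundary value problem for $(u_n,v_n)$, in which the flux transmission condition at the interface reads $u_{n,x}(0^-)=(1+i\mu_n d)\,u_{n,x}(0^+)$, while $v_n$ and $v_{n,x}$ remain continuous at $x=0$. I would tune $\mu_n=\sqrt c\,n\pi$ to the resonances of the conservative $v$-subsystem, so that $v_n(x)=\sin\!\big(n\pi(x+1)\big)$ is an exact Dirichlet eigenmode, $\mu_n^2 v_n+c\,v_{n,xx}=0$ with $v_n(\pm1)=0$. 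On $(-1,0)$ the first wave is then purely oscillatory with wave speed $1$ and wavenumber $\mu_n$, whereas on the damped interval $(0,1)$ the characteristic exponents of the $u$-equation acquire a real part of order $\sqrt{\mu_n/d}$, producing a boundary-layer profile of width $\sim\mu_n^{-1/2}$ that decays away from the interface and contributes only a controlled amount to the dissipation $d\int_0^1|w_{n,x}|^2\,\ud x$.

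The heart of the matter is the matching computation. After solving each equation in closed form on each subinterval and imposing the conditions at $x=\pm1$ together with the transmission conditions at $x=0$, the amplitudes solve a small linear system whose relevant coefficient carries the factor $\sin(2\mu_n)=\sin(2\sqrt c\,n\pi)$, arising from the speed-$1$ oscillation $\sin(\mu_n(x+1))$ accumulating phase across the full interval of length $2$. Estimating all four components, I expect the solution norm to satisfy $\|U_n\|_{\mathcal H}/\|F_n\|_{\mathcal H}\gtrsim \sqrt{\mu_n}\,|\sin(2\sqrt c\,n\pi)|\asymp\sqrt n\,|\sin(2\sqrt c\,n\pi)|$, where the power $\sqrt{\mu_n}$ is exactly the boundary-layer scale. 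Assumption \eqref{thetaAssum}, namely $\sin(2\sqrt c\,n\pi)\neq O(n^{-1/2})$, is precisely what guarantees $\limsup_n \sqrt n\,|\sin(2\sqrt c\,n\pi)|=+\infty$, hence $\|U_n\|_{\mathcal H}\to+\infty$ along a subsequence with $\|F_n\|_{\mathcal H}$ bounded. The hypothesis $c>1$ enters to keep the speed-$1$ and speed-$\sqrt c$ oscillations non-resonant, so that the particular solution driven by the coupling term $i\mu_n v_n$ stays bounded, and to fix the correct branch of the square root defining the boundary layer.

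The step I expect to be the main obstacle is the bookkeeping in this matching computation: one must track the boundary-layer terms carefully enough to isolate the factor $\sin(2\sqrt c\,n\pi)$ with \emph{uniform} control on the remainder, so that \eqref{thetaAssum} can legitimately be invoked, and one must verify that the boundary-layer contribution neither inflates $\|F_n\|_{\mathcal H}$ nor destroys the lower bound on $\|U_n\|_{\mathcal H}$. Checking $U_n\in\mathcal D(\mathcal A)$ (in particular that $(u_{n,x}+a\,w_{n,x})_x\in L^2(-1,1)$ across the interface) is routine once the transmission condition is built in by construction. Assembling these estimates yields an unbounded resolvent on $i\mathbb R$, which by the Gearhart--Huang--Pr\"uss theorem rules out exponential stability and completes the proof.
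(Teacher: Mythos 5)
Your strategy coincides with the paper's: reduce to unboundedness of $\|(i\omega I-\mathcal{A})^{-1}\|$ via the Huang--Pr\"uss criterion, force the conservative $v$-equation at frequencies tuned to its Dirichlet spectrum, solve explicitly on each of $(-1,0)$ and $(0,1)$, and match through the interface where the damped side produces a boundary layer of width $\omega^{-1/2}$. The paper does exactly this, with $\omega_n$ chosen in \eqref{omegan} so that the $v$-dominant characteristic exponent on $(-1,0)$ is exactly $2in\pi$, resonant with data $\sin(2n\pi x)$ supported on the undamped half. So far, so good.

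The genuine gap is that the matching computation you correctly identify as ``the heart of the matter'' is the entire proof, and the one quantitative prediction you make about its outcome is wrong in a way that changes how hypothesis \eqref{thetaAssum} is used. You predict a lower bound $\|U_n\|/\|F_n\|\gtrsim\sqrt{n}\,|\sin(2\sqrt{c}n\pi)|$, with the sine as a multiplicative gain coming from ``speed-$1$ phase accumulated across the full interval of length $2$.'' That cannot be the mechanism: the speed-$1$ oscillation survives only on the undamped interval $(-1,0)$, of length $1$, at wavenumber $\theta\approx\omega_n\approx 2\sqrt{c}n\pi$ (see \eqref{theta}); on $(0,1)$ the $u$-component is a boundary layer, not a travelling wave. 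What the computation actually gives is a resonant (secular) contribution of size $\sqrt{n}$ with no sine factor (via \eqref{omega1BH} and \eqref{v1xN}), \emph{plus} a competing $u$-dominant mode on $(-1,0)$ whose amplitude is $\omega_2^+(-1)\sim 2\pi n\sqrt{c}\,c_3'/\sin\theta$, i.e.\ the sine sits in the \emph{denominator} of a term that could spoil the lower bound when $|\sin\theta|\asymp n^{-1/2}$. Hypothesis \eqref{thetaAssum} is invoked precisely to extract a subsequence along which $1/|\sin\theta|=o(\sqrt{n})$ so this competitor is dominated -- not, as in your reading, to make a factor $\sqrt{n}|\sin(2\sqrt{c}n\pi)|$ diverge. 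Both readings happen to be logically available from the statement ``$\sin(2\sqrt{c}n\pi)\neq O(n^{-1/2})$,'' but only one of them is what the estimate requires, and your sketch gives no derivation that would let a reader decide. Relatedly, you never specify the data $F_n$ (the paper must place it in the $v$-components, supported on $(-1,0)$, with a precise normalization \eqref{fg}), and you take $\mu_n$ exactly at the uncoupled eigenvalue rather than correcting for the $O(n^{-1})$ shift induced by the coupling as in \eqref{omeganBH}; the latter is harmless, but it is one more place where the asserted conclusion rests on a computation that is not performed. As it stands the proposal is a plausible plan whose only concrete output contradicts the actual asymptotics.
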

Noting that the assumption $c>1$ is made here just to make the calculation readable. The second assumption \eqref{thetaAssum} can be fulfilled for instance by taking $c$ such that $2\sqrt{c}$ is an integer number. To prove \eqref{th3} we mainly use the following theorem 
\begin{thm}(see \cite{huang1, pruss}) \label{3.4}
Let $e^{t\mathcal{B}}$ be a bounded $C_0$-semigroup on a Hilbert space $H$ with generator $\mathcal{B}$ such that $i \mathbb{R} \subset \rho (\mathcal{B})$. Then  $e^{t\mathcal{B}}$ is exponentially stable if and only if There exist $a > 0$ and $M > 0$, such that 
$$
\Vert e^{t\mathcal{B}}\Vert_{\mathcal{L}(\mathcal{H})}  \le M e^{-at} , \forall t \geq 0
$$
if and only if 
$$\limsup_{\omega \in \mathbb{R}, \, |\omega| \rightarrow \infty} \Vert (i\omega I-\mathcal{B})^{-1}\Vert_{\mathcal{L}(\mathcal{H})} < \infty.$$
 \end{thm}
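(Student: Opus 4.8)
The content of this statement is the equivalence between the uniform exponential decay $\Vert e^{t\mathcal B}\Vert_{\mathcal L(H)}\le M\e^{-at}$ and the uniform boundedness of the resolvent along the imaginary axis (the first ``if and only if'' being merely the definition of exponential stability). The plan is to treat the two implications separately, the first being elementary and the second carrying all the weight. For the easy implication I would assume $\Vert e^{t\mathcal B}\Vert_{\mathcal L(H)}\le M\e^{-at}$; then the Laplace representation $(\lambda I-\mathcal B)^{-1}=\int_0^\infty \e^{-\lambda t}e^{t\mathcal B}\,\ud t$ converges absolutely whenever $\re\lambda>-a$ and yields $\Vert(\lambda I-\mathcal B)^{-1}\Vert_{\mathcal L(H)}\le M/(\re\lambda+a)$. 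Restricting to $\lambda=i\omega$ gives $\Vert(i\omega I-\mathcal B)^{-1}\Vert_{\mathcal L(H)}\le M/a$ for all $\omega$, so the $\limsup$ is finite and $i\mathbb R\subset\rho(\mathcal B)$ is re-confirmed.

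The substantial implication is that the resolvent bound forces exponential decay, and the first preliminary step is to thicken the imaginary axis into a strip. Writing $M_1:=\sup_{\omega\in\mathbb R}\Vert(i\omega I-\mathcal B)^{-1}\Vert$, which is finite because the $\limsup$ hypothesis controls large $|\omega|$ while continuity of the resolvent controls a compact range, the factorization $(\eta+i\omega)I-\mathcal B=(i\omega I-\mathcal B)\bigl(I+\eta(i\omega I-\mathcal B)^{-1}\bigr)$ is invertible by Neumann series for $|\eta|<1/M_1$. Hence the resolvent extends analytically to the strip $\{|\re\lambda|\le\delta\}$ with $\delta:=1/(2M_1)$, with the uniform bound $\Vert(\lambda I-\mathcal B)^{-1}\Vert_{\mathcal L(H)}\le 2M_1$ there.

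The engine is then the Hilbert-space Plancherel identity combined with a theorem of Datko. For $x\in H$ and $\epsilon>0$ the map $t\mapsto \e^{-\epsilon t}e^{t\mathcal B}x\,\mathbb{1}_{t\ge0}$ lies in $L^2(\mathbb R;H)$ (the semigroup being bounded), with Fourier transform $\omega\mapsto((\epsilon+i\omega)I-\mathcal B)^{-1}x$, so Parseval gives
$$\int_0^\infty \e^{-2\epsilon t}\bigl\Vert e^{t\mathcal B}x\bigr\Vert^2\,\ud t=\frac{1}{2\pi}\int_{-\infty}^{\infty}\bigl\Vert((\epsilon+i\omega)I-\mathcal B)^{-1}x\bigr\Vert^2\,\ud\omega.$$
The resolvent identity $((\epsilon+i\omega)I-\mathcal B)^{-1}x=(i\omega I-\mathcal B)^{-1}x-\epsilon (i\omega I-\mathcal B)^{-1}((\epsilon+i\omega)I-\mathcal B)^{-1}x$ together with the strip bound yields $\Vert((\epsilon+i\omega)I-\mathcal B)^{-1}x\Vert\le 2\Vert(i\omega I-\mathcal B)^{-1}x\Vert$ for $\epsilon\le\delta$; letting $\epsilon\downarrow0$ by monotone convergence then reduces everything to the single $\epsilon$-independent estimate $\int_{\mathbb R}\Vert(i\omega I-\mathcal B)^{-1}x\Vert^2\,\ud\omega\le K\Vert x\Vert^2$. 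Granting this, one obtains $\int_0^\infty\Vert e^{t\mathcal B}x\Vert^2\,\ud t\le \tfrac{2}{\pi}K\Vert x\Vert^2$ for every $x\in H$, and Datko's theorem upgrades square-integrability of all orbits to exponential decay.

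The hard part is precisely the boundary estimate $\int_{\mathbb R}\Vert(i\omega I-\mathcal B)^{-1}x\Vert^2\,\ud\omega\le K\Vert x\Vert^2$ \emph{uniformly} in $x$. For $x\in D(\mathcal B)$ it follows from the decay $\Vert(i\omega I-\mathcal B)^{-1}x\Vert=O(|\omega|^{-1})$, but the constant then involves $\Vert\mathcal Bx\Vert$ and does not pass to all of $H$ by density alone, since the associated quadratic form is closed but not everywhere defined. This is exactly where the Hilbert-space structure must be used in an essential way; I expect this to be the main obstacle, and I would resolve it by rerouting the argument through Gearhart's spectral characterization of $T(1)=e^{\mathcal B}$. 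The strip bound shows that every $z$ with $1\le|z|\le \e^{\delta}$ lies in $\rho(e^{\mathcal B})$ (each such $z$ equals $e^{\mu}$ with all shifts $\mu+2\pi i k$ in the strip and the resolvents uniformly bounded), while boundedness of the semigroup gives $r(e^{\mathcal B})\le 1$. The two facts force $\sigma(e^{\mathcal B})\subset\{|z|<1\}$, i.e. $r(e^{\mathcal B})<1$, which is equivalent to exponential stability. In either formulation the decisive ingredient is the Parseval identity, reflecting the well-known fact that on a general Banach space a strictly negative spectral bound does not imply exponential decay.
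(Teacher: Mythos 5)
The paper itself offers no proof of this theorem --- it is quoted from \cite{huang1,pruss} --- so your attempt must be measured against the standard proofs in those references, and there it has one genuine gap. The elementary direction (Laplace transform of the semigroup), the Neumann-series extension of the resolvent to the strip $\{|\re\lambda|\le\delta\}$ with $\delta=1/(2M_1)$, the vector-valued Plancherel identity, and the reduction to Datko's theorem are all correct, and you honestly isolate the crux: the uniform boundary estimate $\int_{\R}\|(i\omega I-\mathcal{B})^{-1}x\|^{2}\,\ud\omega\le K\|x\|^{2}$ for all $x\in H$. But your proposed rescue is circular. The assertion that every $z$ with $1\le|z|\le\e^{\delta}$ lies in $\rho(e^{\mathcal{B}})$ because all logarithms $\mu+2\pi ik$ lie in the strip with uniformly bounded resolvents \emph{is} Gearhart's theorem, i.e.\ precisely the nontrivial Hilbert-space statement under proof (the Huang and Pr\"uss versions are reformulations of it). General semigroup theory yields only the easy inclusion $\e^{t\sigma(\mathcal{B})}\subseteq\sigma(e^{t\mathcal{B}})$, whose contrapositive goes in the opposite direction from what you need; the spectral mapping theorem fails for $C_0$-semigroups exactly in your direction, and that failure on Banach spaces is the whole reason the theorem is a Hilbert-space theorem. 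So your final step replaces the missing estimate by an equivalent unproved statement. Note also that the boundary estimate itself is \emph{equivalent} to the conclusion (exponential stability gives it back via Plancherel), so it cannot be obtained by any soft argument as an intermediate stepping stone.

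The gap can be closed while staying close to your framework; this is essentially Pr\"uss's argument. Instead of trying to bound $\int_{\R}\|(i\omega I-\mathcal{B})^{-1}x\|^{2}\,\ud\omega$, use the \emph{square} of the resolvent: the Fourier transform of $t\mapsto t\,\e^{-\mu t}e^{t\mathcal{B}}x\,\mathbb{1}_{t\ge0}$ is $\omega\mapsto((\mu+i\omega)I-\mathcal{B})^{-2}x$, and the key identity $\langle R(\lambda)^{2}x,y\rangle=\langle R(\lambda)x,R(\bar{\lambda},\mathcal{B}^{*})y\rangle$ lets you apply Cauchy--Schwarz in $\omega$ together with Plancherel for the semigroup \emph{and} for the adjoint semigroup at the fixed abscissa $\re\mu=1$, where only the harmless bound $\int_{0}^{\infty}\e^{-2t}\|e^{t\mathcal{B}}x\|^{2}\,\ud t\le\frac{M^{2}}{2}\|x\|^{2}$ is needed --- no $\epsilon$-uniform estimate at all. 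Then shift the contour from $\re\lambda=1$ to $\re\lambda=-\delta$ using your strip bound: justify the shift first for $x,y\in\mathcal{D}(\mathcal{B})$, where $\|R(\lambda)x\|=O(|\omega|^{-1})$ kills the horizontal segments, and observe that the resulting inequality $t\,\e^{\delta t}|\langle e^{t\mathcal{B}}x,y\rangle|\le C\|x\|\,\|y\|$ involves only $\|x\|\,\|y\|$, so it extends to all of $H$ by density --- precisely the density passage that was blocked in your version, now unblocked because the constant no longer contains $\|\mathcal{B}x\|$. This yields $\|e^{t\mathcal{B}}\|\le C\e^{-\delta t}/t$ for $t\ge1$, hence exponential stability, with no appeal to Datko, to the boundary estimate, or to Gearhart's characterization.
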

Now, based on the Theorem \ref{3.4} we prove the Theorem \ref{th3}.
\begin{proof}
Our main objective is to show that:
\begin{equation} \label{Objective}
\Vert (\lambda I-\mathcal{A})^{-1} \Vert_{\mathcal{L}(\mathcal{H})} \text{ is unbounded on the imaginary axis}.
\end{equation}
For $n \in \mathbb{N}$ large enough let  $\lambda= \lambda_n = i\omega_{n}$, where 
\begin{equation}\label{omegan}
\omega_n =  \sqrt{\frac{8c(c+1)n^2\pi^2+2c +\sqrt{\Delta}}{4c}}\qquad\text{with}\qquad\Delta=(8c(c-1)\pi^2n^2)^2+32(c+1)(c\pi n)^2+4c^2.
\end{equation}
It's clear that  $\omega_n \longrightarrow +\infty$ and in particular we have
\begin{equation}\label{omeganBH}
\omega_{n}=\sqrt{c}\left(2n\pi+\frac{n^{-1}}{4\pi(c-1)}-\frac{cn^{-3}}{32\pi^{3}(c-1)^{3}}+o(n^{-4})\right)
\end{equation}
and
\begin{equation}\label{BHomega}
\frac{1}{\omega_{n}}=\frac{1}{2n\pi\sqrt{c}}-\frac{1}{16\sqrt{c}(c-1)(\pi n)^{3}}+o(n^{-4}).
\end{equation}

Define $(F_1 , G_1, F_2 ,G_2)$ $\in (H_0^1(0,1))^2 \times (L^2(0,1) )^2$, such that
\begin{align*}
F_1= F_1(x,n)= 0 \qquad \forall\,x \in (-1,1),
\end{align*}
 \begin{align*}
 G_1=G_1(x,n)= \left\lbrace \begin{array}{ll}
0 &\text{in }( 0,1)
\\ 
\ds g_1= \frac{\sin(2n\pi x)}{2n\pi} &\text{in } (-1,0),
\end{array} \right.
\end{align*}
\begin{align*}
F_2=F_2(x,n)=0 \qquad \forall\,x \in (-1,1),
\end{align*}
\begin{align*}
 G_2=G_2(x,n)= \left\lbrace \begin{array}{ll}
0 &\text{in } ( 0,1)
\\ 
\ds g_2=\frac{c\sin(2n\pi x)}{i \sqrt{\frac{2c}{c+1+\sqrt{(c-1)^2+\frac{4c}{\omega_n^2}}}}} &\text{in } (-1,0).
\end{array} \right.
\end{align*}
A straight forward calculation leads to 
\begin{equation}\label{fg}
\Vert(F_1,G_1,F_2,G_2)\Vert_{\mathcal{H}}^2 =\frac{1}{2}+\frac{1}{2\mu_-} \longrightarrow \frac{1}{2}\left(1+\frac{1}{\sqrt{c}}\right)\quad\text{as}\quad  n \nearrow+\infty.
\end{equation}
Our goal is to prove that $\ds\lim_{|\lambda| \rightarrow \infty} \Vert(\lambda I - \mathcal{A})^{-1} \Vert_{\mathcal{L}(\mathcal{H})} = \infty$. That's why, we  solve the resolvent equation 
\begin{equation}\label{resol}
(\lambda I-\mathcal{A})(u^1,v^1,u^2,v^2)=(F_1,G_1,F_2,G_2).
\end{equation}
\textbf{Step 1.} For all  $x \in (0,1)$ , we have 
\begin{align} \label{eq01}
\left\lbrace \begin{array}{llll}
\lambda u ^1-u^2=0
\\
\lambda v^1-v^2=0
\\
\lambda u^2-(1+\lambda d)u^1_{xx}+v^2=0
\\
\lambda v^2-cv_{xx}^1-u^2=0
\\
v^1(1)=u^1(1)=0 
\end{array}\right.
\end{align}
Let
$$
\eta_+=\frac{-\lambda (1+\lambda d-c)+\omega_n\sqrt{r}e^{i \frac{\phi}{2} }}{2(1+\lambda d)} \qquad \text{ and } \qquad \eta_-=\frac{-\lambda (1+\lambda d-c)-\omega_n\sqrt{r} e^{i \frac{\phi}{2}}}{2(1+\lambda d)} 
$$
where
\begin{equation*}
\ds r=\sqrt{a^2+b^2},\quad \cos(\phi)= \frac{a}{r}  \quad\text{ and }\quad   \sin(\phi)= \frac{b}{r},
\end{equation*}
with
\begin{align*}
\ds a&=-(1-c)^2+d^2\omega^{2}-\frac{4c}{\omega_n^{2}} 
\\
\ds b&=-2d\left((1-c)\omega_n+\frac{2c}{\omega_n}\right).
\end{align*}
It is important to note that 
\begin{equation*}
\sqrt{a}=d\omega-\frac{(c-1)^2}{2d}\omega^{-1}-\frac{(c-1)^{4}+16cd^{2}}{8d^{3}}\omega^{-3}+o(\omega^{-3}),
\end{equation*}
\begin{equation*}
\frac{b}{a}=\frac{2(c-1)}{d}+\frac{2(c-1)-4cd}{d}\omega^{-3}+o(\omega^{-4})
\end{equation*}
and
\begin{align*}
i\frac{\sqrt{r}}{d}\e^{i\frac{\phi}{2}}&=\lambda-\frac{c-1}{d}-\frac{(c-1)^{3}-d^{2}(c-1)+2cd^{2}}{d^{3}}\lambda^{-2}
\\
&+\frac{d^{2}(c-1)^{2}-(c-1)^{4}-2cd^{3}(c-1)-2cd^{2}}{d^{4}}\lambda^{-3}+o(\omega^{-3}).
\end{align*}
Then we obtain
\begin{align}\label{eta+}
\eta_{+}&=-\lambda+\frac{c}{d}-\frac{c}{d^{2}}\lambda^{-1}+\frac{(c-1)^{3}+d^{2}(c+1)+2c}{2d^{3}}\lambda^{-2}
\\
&+\frac{(c-1)^{4}-(c-1)^{3}-d^{2}(c-1)(c-2)-2c}{2d^{4}}+o(\omega^{-3})\nonumber
\end{align}
and
\begin{align}\label{eta-}
\eta_{-}&=-\frac{(c-1)^{3}+d^{2}(c+1)}{2d^{3}}\lambda^{-2}+\frac{(c-1)^{3}(2-c)+d^{2}(c-1)(c-2-2cd)}{2d^{4}}\lambda^{-3}+o(\omega^{-3})
\end{align}
\medskip
A straightforward calculation leads to 
\begin{align} \label{u eta v}
(u^1+\eta_+ v^1)_{xx}=  ( \beta_+)^2(u^1+\eta_+ v^1)
\\
(u^1+\eta_- v^1)_{xx}=  ( \beta_-)^2(u^1+\eta_- v^1),
\end{align}
where 
$$
(\beta_{\pm})^2= \frac{c \lambda ^2- \lambda \eta_{\pm} (1+\lambda d)}{c (1 + \lambda d)}.
$$
So, for $n$ large enough we get 
$$
\beta_{\pm}= \frac{\omega_n}{\sqrt{2c(1+(d\omega_n)^{2})}} \sqrt{r_{\pm}} e^{i \frac{\phi _{\pm}}{2}},
$$  
where 
\begin{equation*}
\ds r_{\pm}=\sqrt{a_{\pm}^2 + b_{\pm}^2},\quad \cos(\phi_{\pm})= \frac{a_{\pm}}{ r_{\pm}}\quad \text{ and }\quad \sin(\phi_{\pm})= \frac{b_{\pm}}{ r_{\pm}}
\end{equation*}
with
\begin{align*} 
\ds a_{\pm}&=-(1+c)-(d\omega_n)^2\pm\sqrt{r}\left(-d\omega_n\cos\left(\frac{\phi}{2}\right)+\sin\left(\frac{\phi}{2}\right)\right)
\\
\ds b_{\pm}&=cd\omega_n \pm \sqrt{r}\left(-\cos\left(\frac{\phi}{2}\right)-d\omega_n\sin\left(\frac{\phi}{2}\right)\right).
\end{align*}
Noting that
$$
|a_{+}|=2(d\omega)^{2}+\frac{c^{2}-3c+6}{2}+o(\omega^{-1}),
$$
$$
\sqrt{|a_{+}|}=\sqrt{2}d\omega+\frac{c^{2}-3c+6}{4\sqrt{2}d}\omega^{-1}+o(\omega^{-1}),
$$
$$
b_{+}=\left(d(2cd+1-c)+\frac{(c-1)^{3}}{d}\right)\omega^{-1}+o(\omega^{-1}),
$$
and
$$
\frac{b_{+}}{a_{+}}=o(\omega^{-2}).
$$
Then we obtain
\begin{equation}\label{beta+}
\beta_{+}=\frac{\lambda}{\sqrt{c}}-\frac{(c-1)(c-2)}{8\sqrt{2}d^{2}}+o(\omega^{-1}),
\end{equation}
and
\begin{equation}\label{beta+2}
\beta_{+}^{2}=\frac{\lambda^{2}}{c}+o(1).
\end{equation}
Similarly we have
$$
b_{-}=2cd\omega+\left(d(c-1-2cd)-\frac{(c-1)^{3}}{d}\right)\omega^{-1}+o(\omega^{-1}),
$$
$$
\sqrt{b_{-}}=\sqrt{2cd\omega}\left(1+\left(\frac{c-1-2cd}{4c}-\frac{(c-1)^{3}}{4cd^{2}}\right)\omega^{-2}\right)+o(\omega^{-2})
$$
$$
a_{-}=-2c+o(\omega^{-1}),
$$
and
$$
\frac{a_{-}}{b_{-}}=-\frac{\omega^{-1}}{d}+o(\omega^{-2}),
$$
then consequently we obtain
\begin{equation}\label{beta-}
\beta_{-}=\sqrt{\frac{\omega}{d}}\e^{\frac{i\pi}{4}}-\frac{\omega^{-\frac{1}{2}}}{2d^{\frac{3}{2}}}\e^{-i\frac{\pi}{4}}+o(\omega^{-1}),
\end{equation}
and
\begin{align}\label{beta-2}
\beta_{-}^{2}&=\frac{\lambda}{d}-\frac{1}{d^{2}}+\frac{(c-1)^{3}+d(c-1)+2c}{2cd^{3}}\lambda^{-1}
\\
&-\frac{(c-1)^{3}(2-c)+d(c-1)(c-2-2cd)+2c}{2cd^{4}}\lambda^{-2}+o(\omega^{-2})\nonumber
\end{align}
Next, from  \eqref{u eta v}, we get 
\begin{equation*}
(u^1+\eta_+v^1)= c_1 e^{x\beta_+}+c_2 e^{-x \beta_+}
\end{equation*}
and
\begin{equation*}
(u^1+\eta_- v^1)= c_3 e^{x \beta_-}+c_4 e^{-x \beta_-}.
\end{equation*}
Recalling that $u ^1(1)=v^1(1)=0$ we can rewrite the last two equations as follow
\begin{equation} \label{eta-2}
(u^1+\eta_+ v^1)= c_1( e^{x\beta_+}- e^{(2-x)\beta_+}),
\end{equation}  
\begin{equation} \label{eta+2}
(u^1+\eta_- v^1)= c_3(e^{x\beta_-}- e^{(2-x)\beta_-}).
\end{equation} 
Hence by combining \eqref{eta-2} and \eqref{eta+2} we obtain
\begin{equation}\label{solu1+}
u^{1}(x)=-\frac{c_{1}\eta_{-}}{\eta_{+}-\eta_{-}}\left(\e^{\beta_{+}x}-\e^{\beta_{+}(2-x)}\right)+\frac{c_{3}\eta_{+}}{\eta_{+}-\eta_{-}}\left(\e^{\beta_{-}x}-\e^{\beta_{-}(2-x)}\right),
\end{equation}
and
\begin{equation}\label{solv1+}
v^{1}(x)=\frac{c_{1}}{\eta_{+}-\eta_{-}}\left(\e^{\beta_{+}x}-\e^{\beta_{+}(2-x)}\right)-\frac{c_{3}}{\eta_{+}-\eta_{-}}\left(\e^{\beta_{-}x}-\e^{\beta_{-}(2-x)}\right).
\end{equation}
\textbf{Step 2.} For all $x \in (-1,0)$ we have
\begin{align}\label{eq(-1.0)}
\left\lbrace \begin{array}{llll}
\lambda u ^1-u^2=0
\\
\lambda v^1-v^2=g_1
\\
\lambda u^2-u^1_{xx}+v^2=0
\\
\lambda v^2-cv_{xx}^1-u^2=g_2
\\
v^1(-1)=u^1(-1)=0. 
\end{array}\right.
\end{align}
 \medskip
Following to the third and the fourth equation of \eqref{eq01} and of \eqref{eq(-1.0)} we can deduce, thanks to the regularity of the stats, that
\begin{align} 
(1+ \lambda d)u_{x}^{1}(0^+)&=u_{x}^{1}(0^-) \label{cond1u0},
\\
v_{x}^{1}(0^+)&=v_{x}^{1}(0^-)\label{cond1v0}.
\end{align}
and
\begin{align}
(1+ \lambda d)u_{xx}^{1}(0^+)&=u_{xx}^{1}(0^-),\label{cond2u0}
\\
v_{xx}^{1}(0^+)&=v_{xx}^{1}(0^-)\label{cond2v0}.
\end{align}
We denote by  
\begin{equation}\label{alpha+}
\alpha_+= \frac{\lambda}{2}\left(c-1+\sqrt{(1-c) ^2+\frac{4c}{\o_n^2}}\right)=(c-1)\lambda-\frac{c}{c-1}\lambda^{-1}-\frac{c^{2}}{(c-1)^{3}}+o(\omega^{-3}),
\end{equation}
and
\begin{equation}\label{alpha-}
\alpha_-= \frac{\lambda}{2}\left(c-1-\sqrt{(1-c)^2+\frac{4c}{\o_n^2}}\right)=\frac{c}{c-1}\lambda^{-1}+o(\omega^{-1})
\end{equation}
and we define for $n$ large enough $\mu_{\pm}$ as follow
$$
\mu_{\pm}= \frac{\sqrt{2c}}{\sqrt{c+1-\left(\pm \sqrt{(c-1)^2+\frac{4c}{\omega _n ^2}}\right)}},
$$
in particular with the chose of $\omega_{n}$ in \eqref{omegan} one get 
$$
\ds\mu_{\pm}^2= \frac{\lambda}{\lambda-\frac{\alpha_{\pm}}{c}}.
$$
Besides, we have
\begin{equation}\label{mu+}
\mu_{+}=\sqrt{c}\left(1-\frac{c}{2(c-1)}\lambda^{-2}+o(\omega^{-2})\right),
\end{equation}
\begin{equation}\label{mu-}
\mu_{-}=1+\frac{\lambda^{-2}}{2(c-1)}+o(\omega^{-2}),
\end{equation}
and
\begin{equation}\label{mu+/mu-}
\frac{\mu_{+}}{\mu_{-}}=\sqrt{c}\left(1-\frac{c+1}{2(c-1)}\lambda^{-2}+o(\omega^{-2})\right).
\end{equation}
We set
\begin{align}
\omega^+_1(x)&= (u^2+\alpha_+v^2+  \mu_+(u^1_x + \alpha_+v^1_x),\label{omega1+}
\\ 
\omega ^-_1(x)&=(u^2+\alpha_+v^2-\mu_+(u^1_x + \alpha_+v^1_x)),\label{omega1-}
\\
\omega ^+_2(x)&= (u^2+\alpha_ -v^2 + \mu_-(u^1_x + \alpha_- v^1_x)),\label{omega2+}
\\
\omega ^-_2(x)&= (u^2+\alpha_- v ^2-\mu_-(u^1_x + \alpha_- v^1_x)).\label{omega2-}
\end{align}
Now, define $Y=(\omega^+_1 , \omega^-_1,\omega^+_2,\omega_2^-)^t$ and $Z=(g_{1x},g_2)^t$. Then we have
\begin{equation}\label{w}
Y_x = AY + BZ 
\end{equation}
where 
$$
A= \left( \begin{array}{cccc}
\mu_+( \lambda -\frac{\alpha_+}{c})  & 0&0&0
\\
0&\mu_+(-\lambda +\frac{\alpha_+}{c}) &0&0
\\
0&0& \mu_-(\lambda -\frac{\alpha_-}{c})&0
\\
0&0&0&\mu_-(- \lambda +\frac{\alpha_-}{c} )
\end{array} \right)
$$
and 
$$
B=\left(  \begin{array}{cc}
-\alpha_+ & - \mu_+ \frac{\alpha_+}{c}
\\
-\alpha_+ & \mu_+ \frac{\alpha_+}{c}
\\
-\alpha_- & - \mu_- \frac{\alpha_-}{c}
\\
-\alpha_- & \mu_- \frac{\alpha_-}{c}
\end{array}\right).
$$
Then, a straightforward calculation leads to: 
$$
\mu_+ ( \lambda -\frac{\alpha_+}{c}) = 2in\pi.
$$
Using the boundary condition at $-1$ we get
\begin{equation}\label{LES1}
\omega_1^+(-1)=-\omega_1^-(-1) \; \hbox{and} \; \omega_2^+(-1)=-\omega_2^-(-1),
\end{equation}
Taking into account of \eqref{LES1} then the solution of \eqref{w} is written as follow
\begin{align}
\o_1^+(x)&=\o_1^+(-1)\e^{2in\pi x}-\frac{\alpha_{+}}{2}\left[\left(1-\frac{\mu^{+}}{\mu^{-}}\right)(x+1)\e^{2in\pi x}+\frac{1}{2n\pi}\left(1+\frac{\mu^{+}}{\mu^{-}}\right)\sin(2n\pi  x)\right],\label{sol1+}
\\
\o_1^-(x)&=-\o_1^+(-1)\e^{-2in\pi x}-\frac{\alpha_{+}}{2}\left[\left(1-\frac{\mu^{+}}{\mu^{-}}\right)(x+1)\e^{-2in\pi x}+\frac{1}{2n\pi}\left(1+\frac{\mu^{+}}{\mu^{-}}\right)\sin(2n\pi  x)\right],\label{sol1-}
\\
\o_2^+(x)&=\o_2^+(-1)\e^{\mu_{-}\left(\lambda-\frac{\alpha_{-}}{c}\right)(x+1)}+\frac{\alpha_{-}}{2in\pi+\mu_{-}\left(\lambda-\frac{\alpha_{-}}{c}\right)}\left[\e^{-2in\pi x}+\e^{\mu_{-}\left(\lambda-\frac{\alpha_{-}}{c}\right)(x+1)}\right],\label{sol2+}
\\
\o_2^-(x)&=-\o_2^+(-1)\e^{-\mu_{-}\left(\lambda-\frac{\alpha_{-}}{c}\right)(x+1)}-\frac{\alpha_{-}}{2in\pi+\mu_{-}\left(\lambda-\frac{\alpha_{-}}{c}\right)}\left[\e^{2in\pi x}+\e^{-\mu_{-}\left(\lambda-\frac{\alpha_{-}}{c}\right)(x+1)}\right].\label{sol2-}
\end{align}
Taking the trace of $\o_1^+$ and $\o_1^-$ in \eqref{sol1+}-\eqref{sol1-} and  in \eqref{omega1+}-\eqref{omega1-} on the boundary $0$ and using the continuity of the states $u_{2}$ and $v_{2}$ we obtain
 \begin{align*}
(\omega_1^+ + \omega_1^-)(0^-)&=\alpha_+\left(\frac{\mu_+}{\mu_-}-1\right)= 2u^2(0^-)+2\alpha_+v^2(0^{-})\nonumber
\\
&= 2\lambda( u^1(0^-) + \alpha_+ v^1(0^-))=2\lambda( u^1(0^+) + \alpha_+ v^1(0^+))\nonumber
\\
&=\frac{2\lambda}{\eta_+-\eta_-}\left(c_1(1-e^{2\b_+})(\alpha_+-\eta_-)+ c_3(1-e^{2 \b_-})(\eta_+-\alpha_+)\right),
\end{align*}
where we have used the the expressions of $u^{1}$ and $v^{1}$ in \eqref{solu1+} and \eqref{solv1+}.
 \medskip
This implies that
\begin{equation}\label{c3=f(c1)}
c_3=\frac{1-e^{2\b_+}}{1-e^{2\b_-}}A_nc_1+\frac{B_n}{1-e^{2\b_{-}}}
\end{equation}
where
\begin{align} \label{An}
A_n= \frac{\eta_{-}-\alpha_{+}}{\eta_{+}-\alpha_{+}}&=\frac{c-1}{c}\left(1+\frac{\lambda^{-1}}{d}-\frac{\lambda^{-2}}{c-1}+o(\omega^{-2})\right)\nonumber
\\
&=\frac{c-1}{c}\left(1+\frac{n^{-1}}{2i\pi d\sqrt{c}}-\frac{n^{-2}}{4\pi^{2}c(c-1)}+o(\omega^{-2})\right),
\end{align}
and
\begin{align}\label{Bn}
B_n&=\frac{\alpha_{+}(\eta_{+}-\eta_{-})\left(\frac{\mu_{+}}{\mu_{-}}-1\right)}{2\lambda(\eta_{+}-\alpha_{+})}\nonumber
\\
&=\frac{(c-1)(\sqrt{c}-1)}{2c}\left(1-\frac{c-1}{d}\lambda^{-1}-\left(\frac{1}{(c-1)^{2}}+\frac{\sqrt{c}(c+1)}{2(\sqrt{c}-1)(c-1)}\right)\lambda^{-2}+o(\omega^{-2})\right)\nonumber
\\
&=\frac{(c-1)(\sqrt{c}-1)}{2c}\left(1-\frac{c-1}{2i\pi d\sqrt{c}}n^{-1}-\left(\frac{1}{(c-1)^{2}}+\frac{\sqrt{c}(c+1)}{2(\sqrt{c}-1)(c-1)}\right)\times\frac{n^{-2}}{4\pi^{2}c}+o(n^{-2})\right).
\end{align}
where we used here \eqref{eta+}, \eqref{eta-}, \eqref{alpha+}, \eqref{alpha-}, \eqref{mu+/mu-} and \eqref{omegan}.
\\
Using \eqref{sol1+}-\eqref{sol1-} and \eqref{cond2u0}-\eqref{cond2v0}, one gets
\begin{align*}
(\o_1^+-\o_1^-)'(0^-)&=2i n \pi \alpha_+\left(\frac{\mu_+}{\mu_-}-1\right)=2 \mu_+ (u^1+ \alpha_+ v^1)_{xx}(0^-)=2 \mu_+ ((1+\lambda d )u^1+ \alpha_+ v^1)_{xx}(0^+)
\\
&=\frac{2\mu_{+}\left[c_{1}\beta_{+}^{2}(1-\e^{2\beta_{+}})(\alpha_{+}-(1+\lambda d)\eta_{-})+c_{3}\beta_{-}^{2}(1-\e^{2\beta_{-}})((1+\lambda d)\eta_{+}-\alpha_{+})\right]}{\eta_{+}-\eta_{-}}.
\end{align*}
Then we obtain
\begin{equation}\label{c1=f(c3)}
c_{1}=\frac{1-e^{2\b_-}}{1-e^{2\b_+}}A_{n}'c_{3}+\frac{B_{n}'}{1-e^{2\b_{+}}}
\end{equation}
where
\begin{align}\label{An'}
A_{n}'&=\frac{\beta_{-}^{2}(\alpha_{+}-(1+\lambda d)\eta_{+})}{\beta_{+}^{2}(\alpha_{+}-(1+\lambda d)\eta_{-})}\nonumber
\\
&=\frac{c}{c-1}\left(1-\frac{\lambda^{-1}}{d}+\left(\frac{(c-1)^{3}}{2cd^{2}}+\frac{c-1}{2cd}+\frac{3-c}{2d^{2}}+\frac{1}{2}\right)\lambda^{-2}+o(\omega^{-2})\right)\nonumber
\\
&=\frac{c}{c-1}\left(1-\frac{n^{-1}}{2i\pi d\sqrt{c}}+\left(\frac{(c-1)^{3}}{2cd^{2}}+\frac{c-1}{2cd}+\frac{3-c}{2d^{2}}+\frac{1}{2}\right)\frac{n^{-2}}{4\pi^{2}c}+o(n^{-2})\right),
\end{align}
and
\begin{align}\label{Bn'}
B_{n}'&=\frac{in\pi\alpha_{+}(\eta_{+}-\eta_{-})\left(\frac{\mu_{+}}{\mu_{-}}-1\right)}{\mu_{+}\beta_{+}^{2}(\alpha_{+}-(1+\lambda d)\eta_{-})}\nonumber
\\
&=\frac{n\pi(c-\sqrt{c})}{2\omega}\left(-1+\frac{c}{d}\lambda^{-1}+\left(\frac{c+\sqrt{c}+3}{2(c-1)^2}-\frac{c+1+d^{2}}{2d^{2}}\right)\lambda^{-2}\right)+o(\omega^{-2})\nonumber
\\
&=\frac{\sqrt{c}-1}{2}\left(-1+\frac{\sqrt{c}}{2i\pi d}n^{-1}+\left(\frac{\sqrt{c}+4}{c-1}-\frac{c+1+d}{d^{2}}\right)\frac{n^{-2}}{8c\pi^{2}}+o(n^{-2})\right).
\end{align}
where we used here \eqref{eta+}, \eqref{eta-}, \eqref{beta+2}, \eqref{beta-2}, \eqref{alpha+}, \eqref{alpha-}, \eqref{mu+}, \eqref{mu+/mu-} and \eqref{omegan}.
\\
Combining \eqref{c3=f(c1)} and \eqref{c1=f(c3)} then we find that
\begin{equation}\label{c1}
c_{1}=\frac{1}{1-e^{2\beta_{+}}}\times\frac{A_{n}'B_{n}+B_{n}'}{1-A_{n}A_{n}'}=\frac{c_{1}'}{1-e^{2\beta_{+}}}
\end{equation}
and
\begin{equation}\label{c3}
c_{3}=\frac{1}{1-e^{2\beta_{-}}}\times\frac{A_{n}B_{n}'+B_{n}}{1-A_{n}A_{n}'}=\frac{c_{3}'}{1-e^{2\beta_{-}}},
\end{equation}
where following to \eqref{An}, \eqref{Bn}, \eqref{An'} and \eqref{Bn'} we have
\begin{equation}\label{c'}
c_{1}'=O(1)\qquad \text{ and }\qquad c_{3}'=O(1).
\end{equation}
\medskip
In another hand, by denoting $\theta=-i\mu_{-}\left(\lambda-\frac{\alpha_{-}}{c}\right)$ and by using the same argument as previously, one gets
\begin{align*}
(\o_2^++\o_2^-)(0^-)&= 2i\sin(\theta)\o_2^+(-1)+\frac{2 \alpha_-}{2n \pi - \theta} \sin(\theta)=2 \lambda(u^1+\alpha_- v^1)(0^-)=2 \lambda(u^1+\alpha_- v^1)(0^+)
\\
&=\frac{2\lambda}{\eta_+-\eta_-}\left(c_{1}'(\alpha_{-}-\eta_{-})+ c_{3}'(\eta_+-\alpha_-)\right).
\end{align*}
It's clear that $\theta \neq 0[\pi]$ then we can write 
\begin{align}\label{omega2}
\o_2^+(-1)&= \frac{\lambda}{i\sin(\theta)(\eta_{+}-\eta_{-})}\left[c_{1}'(\alpha_--\eta_-)+ c_{3}'(\eta_+-\alpha_-)\right]-\frac{\alpha_{-}}{2in\pi+i\theta}.
\end{align}
Noting that from \eqref{omegan}, \eqref{omeganBH}, \eqref{alpha-} and \eqref{mu-} we have
\begin{equation}\label{theta}
\theta=\omega\left(1-\frac{3}{2(c-1)}\omega^{-2}+o(\omega^{-2})\right)=\sqrt{c}\left(2n\pi+\frac{c\pi-12}{4c\pi^{2}(c-1)}n^{-1}+o(n^{-1})\right)
\end{equation}
Then from \eqref{omeganBH}, \eqref{eta+}, \eqref{eta-}, \eqref{alpha-}, \eqref{mu-} and \eqref{theta} we deduce that
\begin{equation}\label{omega2BH}
\o_{2}^{+}(-1)\sim \frac{2\pi n\sqrt{c}\,c_{3}'}{\sin(\theta)}
\end{equation}
\medskip
Using \eqref{cond1u0}-\eqref{cond1v0}, \eqref{omega1+}-\eqref{omega1-} and \eqref{sol1+}-\eqref{sol1-} we get
\begin{align}\label{omega1}
\o_{1}^{+}(-1)&=\frac{(\o_1^+-\o_1^-)(0^-)}{2}=\mu_+ (u^1+ \alpha_+ v^1)_{x}(0^-)=\mu_+ ((1+\lambda d )u^1+ \alpha_+ v^1)_{x}(0^+)\nonumber
\\
&=\frac{\mu_{+}}{\eta_{+}-\eta_{-}}\left[c_{1}\beta_{+}(1+e^{2\beta_{+}})(\alpha_{+}-(1+\lambda d)\eta_{-})+c_{3}\beta_{-}(1+e^{2\beta_{-}})((1+\lambda d)\eta_{+}-\alpha_{+})\right].
\end{align}
Then from  \eqref{omeganBH}, \eqref{eta+}, \eqref{eta-}, \eqref{beta+}, \eqref{beta-}, \eqref{alpha-} and \eqref{mu+} we deduce that
\begin{equation}\label{omega1BH}
\o_{1}^{+}(-1)\sim c_{3}'\sqrt{\frac{c}{d}}\e^{-i\frac{\pi}{4}}(2\pi\sqrt{c}n)^{\frac{3}{2}}.
\end{equation}
Next, for all $x \in (-1,0)$ we have
\begin{align}\label{v1x}
v_x^{1}(x)&=\frac{1}{2 \mu_- \mu_+ (\alpha_+-\alpha_-)}\left[\alpha_{-}(\omega_1^+(x)-\omega_1^-(x))-\alpha_{+}(\omega_2^+(x)-\omega_2^-(x))\right]\nonumber
\\
&=\frac{1}{2 \mu_- \mu_+ (\alpha_+-\alpha_-)}\Bigg[\mu_{-}\left(2\omega_{1}^{+}(-1)\cos(2n\pi x)-i\alpha_{+}\left(1-\frac{\mu_{+}}{\mu_{-}}\right)(x+1)\sin(2n\pi x)\right)
\\
&-\mu_{+}\bigg(2\omega_{2}^{+}(-1)\cos\left(\theta(x+1)\right)+\frac{2\alpha_{-}}{2in\pi+i\theta}\left(\cos(2n\pi x)+\cos\left(\theta(x+1)\right)\right)\bigg)\Bigg],\nonumber
\end{align}
where we have used \eqref{omega1+}-\eqref{omega2-} and \eqref{sol1+}-\eqref{sol2-}. Thus further leads to
\begin{align}\label{v1xN}
\|v_{x}^{1}\|_{L^{2}(-1,0)}^{2}\geq\max\left\{\frac{|\omega_{1}^{+}(-1)|^{2}}{2\mu_{+}^{2}|\alpha_{+}-\alpha_{-}|^{2}},\frac{|\omega_{2}^{+}(-1)|^{2}}{\mu_{-}^{2} |\alpha_{+}-\alpha_{-}|^{2}}\right\}-\frac{|\alpha_{+}|^{2}(\mu_{+}-\mu_{-})^{2}}{4\mu_{-}^{2}\mu_{+}^{2} |\alpha_{+}-\alpha_{-}|^{2}}
\\
-\min\left\{\frac{|\omega_{1}^{+}(-1)|^{2}}{2\mu_{+}^{2}|\alpha_{+}-\alpha_{-}|^{2}},\frac{|\omega_{2}^{+}(-1)|^{2}}{\mu_{-}^{2} |\alpha_{+}-\alpha_{-}|^{2}}\right\}-\frac{2|\alpha_{-}|^{2}}{\mu_{-}^{2}\mu_{+}^{2}(2n\pi+\theta)^{2}|\alpha_{+}-\alpha_{-}|^{2}}.\nonumber
\end{align}
Since,
$$
\sin(\theta)\neq O(n^{-\frac{1}{2}}),
$$
as $n$ goes to the infinity (by \eqref{theta} assumption \eqref{thetaAssum}) then by using \eqref{omegan}, \eqref{mu+}, \eqref{mu-}, \eqref{alpha+}, \eqref{alpha-}, \eqref{omega2BH} and \eqref{omega1BH} we can show that the second and the fourth terms of the right hand side of \eqref{v1xN} are bounded while the sum of the fist and the third terms  tends to the infinity as $n$ goes to $+\infty$, therefore we obtain 
\begin{equation}\label{vx1BH}
\|v_{x}^{1}\|_{L^{2}(-1,0)}^{2}\,\,\quad\text{as}\quad n\nearrow+\infty.
\end{equation}
Last but not least, we have
 \begin{equation} \label{fin}
 \Vert (i \omega_n I- \mathcal{A})^{-1}(F_1,G_1,F_2,G_2)\Vert_{\mathcal{H}}=\Vert (u^1,v^1,u^2,v^2)\Vert_{\mathcal{H}}^2\geq 
\int_{-1}^0 \vert v_{x}^{1}(x) \vert^2 \ud x \longrightarrow+\infty,\quad\text{as}\quad n\nearrow+\infty.
\end{equation}
Finally we conclude, using \eqref{fin} and \eqref{fg} that 
$$
\limsup_{\omega \in \mathbb{R}, |\omega| \rightarrow \infty}\Vert ( i\omega I- \mathcal{A})^{-1}\Vert_{\mathcal{L}(\mathcal{H})} =+\infty.
$$ 
So, $e^{t\mathcal{A}}$ is not exponentially stable in the energy space. This completes the proof.
\end{proof}
\section{Polynomial stabilization}\label{CWKVPS}
\medskip
This subsection aims to prove the polynomial stability given by the following theorem:
\begin{thm} \label{th2}
The semigroup of contraction $(e^{T\mathcal{A}})_{t\geq 0}$ is polynomially stable of order $\ds\frac{1}{12}$.
\end{thm}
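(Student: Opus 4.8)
The plan is to invoke the now-standard frequency-domain characterization of polynomial stability due to Borichev and Tomilov, which is the natural companion to the Gearhart--Pr\"uss theorem (Theorem \ref{3.4}) already used for the lack of exponential decay. Since Theorem \ref{th1} gives $i\mathbb{R}\cap\sigma(\mathcal{A})=\emptyset$ and $0\in\rho(\mathcal{A})$, the imaginary axis lies in the resolvent set, so it remains to control the growth of $\|(i\omega I-\mathcal{A})^{-1}\|_{\mathcal{L}(\mathcal{H})}$ as $|\omega|\to+\infty$. Concretely, by the Borichev--Tomilov criterion the decay rate $t^{-1/12}$ for data in $\mathcal{D}(\mathcal{A})$ is equivalent to the resolvent estimate
\begin{equation*}
\|(i\omega I-\mathcal{A})^{-1}\|_{\mathcal{L}(\mathcal{H})}=O(|\omega|^{12})\qquad\text{as }|\omega|\to+\infty.
\end{equation*}
So the entire task reduces to proving this polynomial resolvent bound; the order $1/12$ is exactly $1/\ell$ with $\ell=12$ being the power appearing here.

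First I would argue by contradiction in the usual way: suppose the bound fails, so there exist sequences $\omega_n\in\mathbb{R}$ with $|\omega_n|\to+\infty$ and $(u_n,v_n,w_n,z_n)\in\mathcal{D}(\mathcal{A})$ with unit norm in $\mathcal{H}$ such that $\omega_n^{12}(i\omega_n I-\mathcal{A})(u_n,v_n,w_n,z_n)\to 0$ in $\mathcal{H}$. Writing out the four components of the resolvent equation (as in \eqref{Swave}), the dissipation identity \eqref{res1} immediately yields
\begin{equation*}
d\int_0^1|\partial_x w_n|^2\,\ud x=-\re\langle(i\omega_n I-\mathcal{A})y_n,y_n\rangle_{\mathcal{H}}=o(\omega_n^{-12}),
\end{equation*}
so $w_n\to 0$ and hence $u_n\to 0$ in $H^1(0,1)$ on the damped half-interval. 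The heart of the matter is then to propagate this smallness: first from the damped region $(0,1)$ across the interface $x=0$ into the undamped region $(-1,0)$, and simultaneously through the coupling terms $v_t$ and $u_t$ from the dissipative first wave into the conservative second wave $v_n$. This is where the transmission structure enters --- one must exploit the matching conditions at $x=0$ (the analogues of \eqref{cond1u0}--\eqref{cond2v0}) together with multiplier estimates to bound $\|\partial_x u_n\|$, $\|\partial_x v_n\|$, $\|z_n\|$ on all of $(-1,1)$ by quantities that vanish, contradicting the normalization $\|y_n\|_{\mathcal{H}}=1$.

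The hard part, and the source of the specific exponent $12$, will be this propagation estimate across the two interfaces of difficulty: the non-smooth (discontinuous) damping coefficient $a=d\,\mathbb{1}_{[0,1]}$ and the weak indirect coupling through zeroth-order terms. I would carry out carefully chosen multipliers --- typically $x u_x$, $x v_x$ type spatial multipliers localized near $x=0$, combined with cut-off functions separating the damped and undamped sides --- to estimate the boundary/interface traces $u_x(0^{\pm})$, $v_x(0^{\pm})$ and to absorb the coupling terms. Each transfer (damping $\to$ first component, first component $\to$ second component via coupling, damped side $\to$ undamped side via transmission) costs powers of $\omega_n$, and tracking these losses should accumulate to the factor $\omega_n^{12}$; the accumulated power must be shown to be at most $12$ so that the assumed smallness $o(\omega_n^{-12})$ still dominates and forces $\|y_n\|_{\mathcal{H}}\to 0$. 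The delicate book-keeping of these powers, especially the loss incurred at the singular interface where $\mathbb{1}_{[0,1]}$ jumps, is precisely what degrades the decay from the $t^{-1/2}$ of the fully-damped case in \cite{H.P} down to $t^{-1/12}$, and getting the optimal power requires the sharp asymptotic expansions of $\eta_\pm$, $\beta_\pm$, $\alpha_\pm$ and $\mu_\pm$ already assembled in the previous section.
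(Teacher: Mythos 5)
Your framework coincides with the paper's: Theorem \ref{SMKV19} (Borichev--Tomilov) reduces the claim to the resolvent bound $\|(i\beta-\mathcal{A})^{-1}\|_{\mathcal{L}(\mathcal{H})}=O(|\beta|^{12})$, which is then proved by contradiction, starting from the dissipation identity on the damped interval $(0,1)$ and propagating smallness through the coupling and across the interface $x=0$ by multiplier and trace estimates. So the strategy is the right one and is exactly the one the paper follows.

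The difficulty is that your proposal stops where the proof actually begins: the entire quantitative content --- the ordered chain of estimates $\beta_n^{\gamma/2}\|u_{nx}^{2}\|\to0$, $\beta_n^{\gamma/2+1}\|u_{nx}^{1}\|\to0$, $\beta_n^{1/2+3\gamma/8}\|u_n^{2}\|\to0$, $\beta_n^{\gamma/4}|T_n(0^+)|\to0$, then the control of $v_n^1$, $v_n^2$, $v_{nx}^1$ and their traces at $0^+$, and finally the transfer to $(-1,0)$ via the multipliers $(x+1)u_{nx}^1$ and $(x+1)v_{nx}^1$ --- is only described as work to be done, and it is precisely this chain that produces the exponent. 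In particular you never isolate the step that forces $\gamma\ge12$: in the paper it is the pairing of the first equation with $\beta_n^{3-\gamma}v_n^1$ on $(0,1)$ (equation \eqref{SMKV3}), where the interface term $\beta_n^{3}T_n(0^{+})\overline{v_n^{1}(0^{+})}$ can only be absorbed using the previously established decay $\beta_n^{\gamma/4}|T_n(0^{+})|\to0$ of \eqref{Tn0}, which requires $3\le\gamma/4$. Without exhibiting such a mechanism, asserting that the losses ``accumulate to $12$'' is not a proof. A secondary inaccuracy: you claim the sharp expansions of $\eta_{\pm}$, $\beta_{\pm}$, $\alpha_{\pm}$, $\mu_{\pm}$ from Section \ref{CWKVLES} are needed here; they are not. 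Those expansions are tied to the particular frequencies $\omega_n$ of \eqref{omegan} used to disprove exponential stability, whereas the polynomial estimate is a pure multiplier/trace argument valid for arbitrary sequences $\beta_n\to+\infty$ and makes no use of explicit solution formulas.
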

 Our method is based on the Borichev and Tomilov result given by the following:
\begin{thm}\label{SMKV19}\cite[Theorem 2.4]{BT}
Let $\mathcal{B}$ be a generator of a $C_{0}$-semigroup of contraction in a Hilbert space $\mathcal{X}$ with domain $\mathcal{D}(\mathcal{B})$ such that $i\R\subset\sigma(\mathcal{B})$ then $e^{t\mathcal{B}} $ is polynomially stable with order $\frac{1}{\gamma}, \gamma > 0$ i.e. there exists $C>0$ such that
$$
\|e^{t\mathcal{B}}U_{0}\|_{\mathcal{X}}\leq\frac{C}{(1+t)^{\frac{1}{\gamma}}}\|U_{0}\|_{\mathcal{D}(\mathcal{B})},\qquad\forall\,t\geq0,\;\forall\,U_{0}\in\mathcal{D}(\mathcal{B}),
$$
if and only if
$$
\limsup_{\beta\in\R, \, |\beta| \rightarrow\infty}\|\beta^{-\gamma}(i\beta-\mathcal{B})^{-1}\|_{\mathcal{L}(\mathcal{X})}<+\infty.
$$
\end{thm}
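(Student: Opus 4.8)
The plan is to prove the two implications of the equivalence separately, working throughout under the hypothesis that $i\R$ lies in the resolvent set of $\mathcal{B}$ — so that in particular $0\in\rho(\mathcal{B})$ and $\mathcal{B}^{-1}\in\mathcal{L}(\mathcal{X})$ — together with the contraction bound $\|\e^{t\mathcal{B}}\|\le 1$. The delicate implication is that the resolvent growth bound forces polynomial decay of the orbits; the converse is comparatively soft.

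For the converse (decay $\Rightarrow$ resolvent bound) I would first upgrade the decay on smoother data: from $\e^{t\mathcal{B}}\mathcal{B}^{-m}=\bigl(\e^{(t/m)\mathcal{B}}\mathcal{B}^{-1}\bigr)^{m}$ and the assumed rate one gets $\|\e^{t\mathcal{B}}\mathcal{B}^{-m}\|\le C_m(1+t)^{-m/\gamma}$. Choosing $m$ with $m/\gamma>1$ makes $t\mapsto\e^{t\mathcal{B}}\mathcal{B}^{-m}$ integrable, so the Laplace representation $(i\beta-\mathcal{B})^{-1}\mathcal{B}^{-m}=\int_0^{\infty}\e^{-i\beta t}\e^{t\mathcal{B}}\mathcal{B}^{-m}\,\ud t$ holds on the imaginary axis; integrating by parts $m$ times and estimating the boundary terms by negative powers of $|\beta|$ produces $\|(i\beta-\mathcal{B})^{-1}\|=O(|\beta|^{\gamma})$, i.e. the required $\limsup$ condition.

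The implication (resolvent bound $\Rightarrow$ decay) is where the Hilbert-space structure is indispensable. The first step is Plancherel's theorem: for $U_0\in\mathcal{X}$ and $\varepsilon>0$ the Fourier transform in $t$ of $t\mapsto\e^{-\varepsilon t}\e^{t\mathcal{B}}U_0$ (extended by zero to $t<0$) equals $(\varepsilon+i\beta-\mathcal{B})^{-1}U_0$, whence
\[
\int_0^{\infty}\e^{-2\varepsilon t}\|\e^{t\mathcal{B}}U_0\|^2\,\ud t=\frac{1}{2\pi}\int_{\R}\|(\varepsilon+i\beta-\mathcal{B})^{-1}U_0\|^2\,\ud\beta .
\]
Letting $\varepsilon\downarrow0$, justified by $i\R\subset\rho(\mathcal{B})$ and the polynomial bound, transfers this to the imaginary axis. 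The bound $\|(i\beta-\mathcal{B})^{-1}\|\le C(1+|\beta|)^{\gamma}$ does not by itself make the right-hand side finite, so I would feed in regular data and extract decay in $\beta$ from the resolvent identity: for $U_0\in\mathcal{D}(\mathcal{B}^n)$,
\[
(i\beta-\mathcal{B})^{-1}U_0=\sum_{k=0}^{n-1}(i\beta)^{-k-1}\mathcal{B}^kU_0+(i\beta)^{-n}(i\beta-\mathcal{B})^{-1}\mathcal{B}^nU_0 ,
\]
so that with $n>\gamma+\tfrac12$ the tail weight $(1+|\beta|)^{\gamma}/|\beta|^{n}$ becomes square-integrable and $\e^{t\mathcal{B}}U_0\in L^2(0,\infty;\mathcal{X})$.

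It remains to convert this integrability into the pointwise rate. The contraction property gives $t\,\|\e^{t\mathcal{B}}U_0\|^2\le\int_0^t\|\e^{\sigma\mathcal{B}}U_0\|^2\,\ud\sigma$, which already yields $\|\e^{t\mathcal{B}}U_0\|\to0$; the \emph{sharp} exponent $1/\gamma$, with no logarithmic loss, is obtained by localizing the frequency integral to the dyadic blocks $|\beta|\sim 2^{j}$, using the bound $(1+|\beta|)^{\gamma}$ on each block and optimizing the low/high frequency split as a function of $t$ (equivalently, rescaling $\beta\mapsto t^{1/\gamma}\beta$). I expect this rescaling/optimization to be the main obstacle: it is precisely the point at which the Hilbert-space Plancherel identity is essential — a naive contour argument would only give the rate up to a logarithm — and it is also where one must take care to re-insert the gained regularity, by a duality argument, so that only the single power $\|U_0\|_{\mathcal{D}(\mathcal{B})}$ survives in the final estimate rather than $\|U_0\|_{\mathcal{D}(\mathcal{B}^n)}$.
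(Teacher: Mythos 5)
The paper offers no proof of this statement at all --- it is quoted verbatim, as Theorem~2.4 of Borichev--Tomilov \cite{BT} (note the misprint in the paper: the hypothesis should read $i\R\subset\rho(\mathcal{B})$, as you correctly assumed) --- so your proposal must be judged on its own terms. Its architecture is the standard frequency-domain one, but both directions have concrete defects. In the easy direction (decay $\Rightarrow$ resolvent bound), the ``$m$-fold integration by parts'' cannot be performed: each integration by parts trades one factor $\mathcal{B}^{-1}$ for a factor $(i\beta)^{-1}$, and the integral $\int_0^\infty\e^{-i\beta t}\e^{t\mathcal{B}}\mathcal{B}^{-j}\,\ud t$ stays absolutely convergent only while $j/\gamma>1$, so you can descend at best to $j=\lfloor\gamma\rfloor+1$; this yields $\|(i\beta-\mathcal{B})^{-1}\|=O(|\beta|^{\lfloor\gamma\rfloor+1})$, not $O(|\beta|^{\gamma})$, and indeed a scheme producing only integer powers $(i\beta)^{-k}$ can never generate a fractional exponent $\gamma$. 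The standard repair is a truncation via the semigroup functional equation,
$$
(i\beta-\mathcal{B})^{-1}\mathcal{B}^{-1}=\int_0^{\tau}\e^{-i\beta t}\e^{t\mathcal{B}}\mathcal{B}^{-1}\,\ud t+\e^{-i\beta\tau}(i\beta-\mathcal{B})^{-1}\e^{\tau\mathcal{B}}\mathcal{B}^{-1},
$$
combined with $(i\beta-\mathcal{B})^{-1}=i\beta\,(i\beta-\mathcal{B})^{-1}\mathcal{B}^{-1}-\mathcal{B}^{-1}$ and the choice $\tau\sim|\beta|^{\gamma}$, which lets you absorb the last term and obtain the sharp bound.

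In the hard direction, your Plancherel step and the Taylor expansion of the resolvent are sound (the transfer to $\varepsilon>0$ works because $\|(\varepsilon+i\beta-\mathcal{B})^{-1}\|\le 2\,\|(i\beta-\mathcal{B})^{-1}\|$ for a contraction semigroup), and they do give $\e^{\cdot\mathcal{B}}U_0\in L^2(0,\infty;\mathcal{X})$ for $U_0\in\mathcal{D}(\mathcal{B}^n)$, $n>\gamma+\tfrac12$. But from there, monotonicity of contraction orbits yields only $\|\e^{t\mathcal{B}}U_0\|\le C\,t^{-1/2}\|U_0\|_{\mathcal{D}(\mathcal{B}^n)}$, hence after interpolation roughly $t^{-1/(2n)}$ on $\mathcal{D}(\mathcal{B})$ --- strictly weaker than $t^{-1/\gamma}$ --- and the step you defer (``the main obstacle'') is precisely the substance of the theorem. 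Dyadic localization of the $\beta$-integral cannot close it by itself: $\mathcal{B}$ is not normal, there is no spectral measure, and an $L^2_\beta$ bound on localized pieces does not control the pointwise-in-$t$ norm. What the known Hilbert-space proofs use, and your sketch lacks, is the mechanism that generates powers of $t$: the identity $\frac{\ud^N}{\ud\beta^N}(\varepsilon+i\beta-\mathcal{B})^{-1}=(-i)^NN!\,(\varepsilon+i\beta-\mathcal{B})^{-(N+1)}$ inserted in the Fourier representation of $\langle\e^{t\mathcal{B}}U_0,V\rangle$, then Cauchy--Schwarz against the \emph{adjoint} Plancherel bound $\int_{\R}\|(\varepsilon-i\beta-\mathcal{B}^{*})^{-1}V\|^2\,\ud\beta\le\pi\varepsilon^{-1}\|V\|^2$ with $\varepsilon\sim1/t$ and the frequency split at $|\beta|\sim t^{1/\gamma}$. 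Finally, the reduction from $\mathcal{D}(\mathcal{B}^n)$ to $\mathcal{D}(\mathcal{B})$ is not obtained by duality, as you suggest, but by the moment inequality $\|\mathcal{B}^{-1}y\|\le C\|y\|^{1-1/n}\|\mathcal{B}^{-n}y\|^{1/n}$ applied to $y=\e^{t\mathcal{B}}U_0$ together with $\e^{t\mathcal{B}}=\bigl(\e^{(t/n)\mathcal{B}}\bigr)^{n}$. So the plan points in the right direction, but the two load-bearing estimates are missing or misassigned; as it stands the argument proves only a strictly weaker decay rate.
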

Based on Theorem \ref{SMKV19} we are able now to prove our main result given in Theorem \ref{th2} of this section. For this purpose, let's consider the following:
\begin{prop} The operator $\mathcal{A}$ defined in \eqref{damped} satisfies:
\begin{equation}\label{SMKV16}
\limsup_{\beta\in\R, \, |\beta| \rightarrow \infty}\|\beta^{-12}(i\beta-\mathcal{A})^{-1}\|_{\mathcal{L}(\mathcal{H})}<+\infty.
\end{equation}
\end{prop}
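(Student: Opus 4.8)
The plan is to establish the resolvent estimate \eqref{SMKV16} via a contradiction argument combined with a quantitative multiplier analysis, following the standard frequency-domain strategy that underlies Theorem \ref{SMKV19}. Since $i\R\subset\rho(\mathcal{A})$ was already obtained in the proof of Theorem \ref{th1}, the only thing left is to control the growth of $\|(i\beta-\mathcal{A})^{-1}\|$ as $|\beta|\to\infty$. I would suppose, for contradiction, that \eqref{SMKV16} fails: there exist sequences $\beta_n\in\R$ with $|\beta_n|\to\infty$ and $U_n=(u_n,v_n,w_n,z_n)\in\mathcal{D}(\mathcal{A})$ with $\|U_n\|_{\mathcal{H}}=1$ such that
\begin{equation*}
\beta_n^{12}(i\beta_n I-\mathcal{A})U_n=(f_n,g_n,h_n,k_n)\longrightarrow 0\quad\text{in }\mathcal{H}.
\end{equation*}
Writing out the four component equations (as in \eqref{waveIC}, but now with nonzero right-hand sides scaled by $\beta_n^{-12}$) and eliminating $w_n=i\beta_n u_n$ and $z_n=i\beta_n v_n$, I would reduce everything to two coupled second-order equations for $u_n$ and $v_n$ with small data. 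The aim is to show $\|U_n\|_{\mathcal{H}}\to 0$, contradicting the normalization.

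\textbf{The core of the argument} is to extract dissipation and propagate it across the whole interval. Taking the real part of $\langle \beta_n^{12}(i\beta_n-\mathcal{A})U_n,U_n\rangle_{\mathcal{H}}$ and using the dissipation identity \eqref{res1} immediately yields
\begin{equation*}
d\int_0^1|w_{n,x}|^2\,\ud x=\re\langle(f_n,g_n,h_n,k_n),U_n\rangle_{\mathcal{H}}\,\beta_n^{-12}=o(\beta_n^{-12}),
\end{equation*}
so $\|w_{n,x}\|_{L^2(0,1)}=o(\beta_n^{-6})$ and hence $\|u_{n,x}\|_{L^2(0,1)}=o(\beta_n^{-7})$ on the damped half. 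The real work is then a chain of multiplier (Ingham-type) estimates and careful use of the transmission conditions at $x=0$ — the continuity of $u_n$, $v_n$ and the flux conditions $(1+i\beta_n d)u_{n,x}(0^+)=u_{n,x}(0^-)$ analogous to \eqref{cond1u0}–\eqref{cond1v0} — to transport the smallness from $(0,1)$ into the undamped part $(-1,0)$, and to exploit the coupling term (the $v_t$ and $u_t$ terms in \eqref{sc}) to control $v_n$ from $u_n$. I would use piecewise multipliers of the form $x\,\overline{u_{n,x}}$ and $x\,\overline{v_{n,x}}$ on each subinterval, integrate by parts, and bound boundary contributions at the interface using the trace estimates that the flux condition supplies, keeping explicit track of every power of $\beta_n$.

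\textbf{The main obstacle} — and the source of the loss from the optimal rate down to $t^{-1/12}$ — is precisely the interface behaviour: the flux jump $(1+i\beta_n d)u_{n,x}(0^+)=u_{n,x}(0^-)$ means the undamped trace $u_{n,x}(0^-)$ is only controlled by $\beta_n$ times the damped trace, so each transmission across $x=0$ costs powers of $\beta_n$, and the singular (discontinuous) damping coefficient prevents any smoothing that would recover them. I expect the bookkeeping to force the exponent $\gamma=12$: one loses roughly a factor $\beta_n^6$ from the weak dissipation $w_{n,x}=o(\beta_n^{-6})$, and further powers from the interface transmission and from inverting the coupling to bound $v_n$. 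The delicate step is to show that the undamped components $u_n,v_n$ on $(-1,0)$ satisfy $\|u_{n,x}\|_{L^2(-1,0)}+\|v_{n,x}\|_{L^2(-1,0)}=o(1)$ after multiplying by the critical power $\beta_n^{12}$; once all four energy terms are shown to be $o(1)$, the contradiction with $\|U_n\|_{\mathcal{H}}=1$ closes the proof and establishes \eqref{SMKV16}, whence Theorem \ref{th2} follows by Theorem \ref{SMKV19}.
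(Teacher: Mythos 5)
Your strategy coincides with the paper's: argue by contradiction with a normalized sequence $Y_n$ satisfying $\beta_n^{12}(i\beta_n I-\mathcal{A})Y_n\to 0$ in $\mathcal{H}$, extract the dissipation on the damped half $(0,1)$ first (your $\|w_{n,x}\|_{L^2(0,1)}=o(\beta_n^{-6})$ and $\|u_{n,x}\|_{L^2(0,1)}=o(\beta_n^{-7})$ are exactly \eqref{ux2}--\eqref{unx1} with $\gamma=12$), and then use multipliers together with the transmission conditions at $x=0$ to propagate smallness into $(-1,0)$ and contradict $\|Y_n\|_{\mathcal H}=1$. So the route is the paper's route.

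The gap is that everything after the dissipation step is announced rather than carried out, and what is announced does not account for the exponent $12$, which is the entire content of the proposition. The paper's proof hinges on a specific quantitative chain that your outline omits: (i) pairing \eqref{eq3} with $u_n^2$ on $(0,1)$ and controlling the interface term $T_n(0^{+})\overline{u_n^2}(0^{+})$ (with $T_n=u_{nx}^1+au_{nx}^2$) by the interpolation $|f(0)|^2\le C\,\|f\|\,\|f'\|$ combined with $\|T_n'\|_{L^2(0,1)}\sim\|\beta_nu_n^2+v_n^2\|_{L^2(0,1)}$ read off from the equation, which yields $\beta_n^{\frac12+\frac{3\gamma}{8}}\|u_n^2\|_{L^2(0,1)}\to0$ as in \eqref{SMKV1}--\eqref{un2}; (ii) the multiplier $(1-x)T_n$, which controls the trace only at the rate $\beta_n^{\gamma/4}|T_n(0^{+})|\to0$ as in \eqref{Tn0} (note $\gamma/4$, not $\gamma/2$); (iii) pairing with $\beta_n^{3-\gamma}v_n^1$ to force $\beta_n^{2}\|v_n^1\|_{L^2(0,1)}\to0$, whose boundary term $\beta_n^{3}T_n(0^{+})\overline{v_n^1(0^{+})}$ in \eqref{SMKV3} is killed by (ii) only if $3\le\gamma/4$, i.e.\ $\gamma\ge12$. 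That is where $12$ comes from. Your heuristic instead attributes the loss to the flux jump at the interface; but since the quantity actually estimated, $T_n(0^{+})$, equals the undamped trace $u_{nx}^1(0^{-})$ by continuity of the flux, no power of $\beta_n$ is lost in crossing $x=0$ — the loss occurs on the damped side, in the weak pointwise control of $T_n(0^{+})$ and in the high weight $\beta_n^{3}$ needed to reach $v_n^1$ through the coupling. Without the chain (i)--(iii) the proposal contains no argument that the power $12$ (as opposed to any other) suffices, so as written it does not establish \eqref{SMKV16}; the final transfer to $(-1,0)$ via the multipliers $(x+1)\overline{u_{nx}^1}$ and $(x+1)\overline{v_{nx}^1}$ is as you describe.
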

\begin{proof}
To prove \eqref{SMKV16}  we use an argument of contradiction. In fact, if \eqref{SMKV16} is false, then, there exist $\beta_{n}\in \R_{+}$ and $Y_{n}=(u_n^1,v_n ^1,u_n^2,v_n^2)\in\mathcal{D}(\mathcal{A})$ such that 
\begin{equation}\label{SMKV4}
\|Y_n \|_{\mathbf{H}} =1,\;\beta_{n}\nearrow +\infty\text{ and }\beta^{\gamma} (i\beta_{n}\mathrm{I}-\mathcal{A})Y_{n} :=(f_{n}^{1},g_{n}^{1},f_{n}^{2},g_{n}^{2})\longrightarrow\, 0\text{ in }\mathcal{H}\text{ as } n\nearrow+\infty.
\end{equation}
Equivalently, we have  
\begin{equation} \label{eq1}
\beta_{n}^{\gamma}\left(i\beta_{n}u_{n}^{1}-u_{n}^{2}\right)=f_{n}^{1}\,\longrightarrow 0 \hspace{0.3cm} \text{in} \hspace{0.3cm} H_{0}^{1}(-1,1),
\end{equation}
\begin{equation} \label{eq2}
\beta_{n}^{\gamma}\left(i\beta_{n}v_{n}^{1}-v_{n}^{2}\right)=g_{n}^{1}\,\longrightarrow 0 \hspace{0.3cm} \text{in} \hspace{0.3cm} H_{0}^{1}(-1,1),
\end{equation}
 \begin{equation} \label{eq3}
\beta_{n}^{\gamma}\left(i\beta_{n}u_{n}^{2}-\left(u_{nx}^{1}+au_{nx}^{2}\right)_{x}+v_{n}^{2}\right)=f_{n}^{2}\,\longrightarrow 0 \hspace{0.3cm} \text{in} \hspace{0.3cm} L^{2}(-1,1),
\end{equation}
\begin{equation}\label{eq4}
\beta_{n}^{\gamma}\left(i\beta_{n}v_{n}^{2}-cv_{nxx}^{1}-u_{n}^{2}\right)=g_{n}^{2}\,\longrightarrow 0 \hspace{0.3cm} \text{in}\hspace{0.3cm}L^{2}(-1,1).
\end{equation}
We denote by 
$$
T_{n}=u_{nx}^{1}+au_{nx}^{2}.
$$
Taking the real part of $\ds\left\langle\beta^{\gamma}(i\beta_{n}\mathrm{I}-\mathcal{A})Y_{n},Y_{n}\right\rangle_{\mathcal{H}}$ then by the dissipation property of the semigroup of the operator $\mathcal{A}$ we get
$$
\beta_{n}^{\gamma} \int_{0}^{1}d.\vert u_{nx}^{2}\vert^{2}\,\ud x\,\longrightarrow 0,
$$
which leads to
\begin{equation}\label{ux2}
\beta_{n}^{\frac{\gamma}{2}}\|u_{nx}^{2}\|_{L^2(0,1)}\longrightarrow 0.
\end{equation}
Now thanks to \eqref{eq1} and \eqref{ux2}, we obtain
\begin{equation}\label{unx1}
\beta_{n}^{\frac{\gamma}{2}+1}\|u_{nx}^{1}\|_{L^{2}(0,1)}\longrightarrow 0.
\end{equation}
From \eqref{ux2} and \eqref{unx1}, it follows
\begin{equation}\label{Tn}
\beta_{n}^{\frac{\gamma}{2}}\|T_{n}\|_{L^{2}(0,1)} \longrightarrow 0.
\end{equation}
Taking the inner product of \eqref{eq3} with $u_n^2$ in $L^{2}(0,1)$ we get
\begin{equation}\label{SMKV1}
\beta_{n}^{\frac{3\gamma}{4}}\left(i\beta_{n}\|u_{n}^{2}\|^{2}_{L^{2}(0,1)}+\langle T_{n},u_{nx}^{2}\rangle_{L^2(0,1)}+T_{n}(0^{+})\overline{u_{n}^{2}}(0^+)+\langle v_{n}^{2} , u_{n}^{2}\rangle_{L^2(0,1)}\right)=o(1).
\end{equation}
Thanks to \eqref{SMKV4}, \eqref{ux2} and \eqref{Tn}, it's clear that the second and the last terms converge to zero. Furthermore, we have
$$
\beta_{n}^{\frac{3\gamma}{4}}T_{n}(0^{+})\overline{u_n^2}(0^+)\leq C\beta_{n}^{\frac{\gamma}{2}}\left(\|T_{n}\|_{L^2(0,1)}^{\frac{1}{2}}.\|u_{nx}^{2}\|_{L^2(0,1)}^{\frac{1}{2}}.\|T_{n}'\|_{L^{2}(0,1)}^{\frac{1}{2}}.\|u_{n}^{2}\|_{L^2(0,1)}^{\frac{1}{2}}\right).
$$
From \eqref{eq3} we can see that $\|\beta_{n}u_{n}^{2}+v_{n}^{2}\|_{L^2(0,1)}\sim\|T_{n}'\|_{L^2(0,1)}$ which implies that
\begin{align}\label{SMKV2}
\beta_{n}^{\frac{3\gamma}{4}}|T_{n}(0^{+})|.|\overline{u_{n}^{2}}(0^{+})|&\leq C\beta_{n}^{\frac{3\gamma}{4}}\|T_{n}\|_{L^{2}(0,1)}^{\frac{1}{2}}.\|u_{nx}^{2}\|_{L^{2}(0,1)}^{\frac{1}{2}}\times\nonumber
\\
&\left(\|\beta_{n}u_{n}^{2}\|_{L^{2}(0,1)}^{\frac{1}{2}}+\|v_{n}^{2}\|_{L^{2}(0,1)}^{\frac{1}{2}}+o(1)\right).\|u_{n}^{2}\|_{L^{2}(0,1)}^{\frac{1}{2}}\nonumber
\\
&\leq C\|\beta_{n}^{\frac{\gamma}{2}}T_{n}\|_{L^2(0,1)}^{\frac{1}{2}}.\|\beta_{n}^{\frac{\gamma}{2}}u_{nx}^{2}\|_{L^2(0,1)}^{\frac{1}{2}}\times\nonumber
\\
&\left(\|\beta_{n}u_{n}^{2}\|_{L^2(0,1)}^{\frac{1}{2}}+\|v_{n}^{2}\|^{\frac{1}{2}}_{L^2(0,1)}\right)\|\beta_{n}^{\frac{\gamma}{2}}u_{n}^{2}\|_{L^2(0,1)}^{\frac{1}{2}}+o(1)\nonumber
\\
&\leq\left(1+\beta_{n}^{\frac{1}{2}+\frac{\gamma}{4}}.\|u_{n}^{2}\|_{L^2(0,1)}\right)o(1).
\end{align}
Combining \eqref{SMKV1} and \eqref{SMKV2}, one follows
\begin{equation}\label{un2}
\beta_{n}^{\frac{1}{2}+\frac{3\gamma}{8}}\|u_{n}^{2}\|_{L^{2}(0,1)}\,\longrightarrow 0.
\end{equation}
Moreover, multiplying \eqref{eq3} by $\beta_{n}^{-\frac{\gamma}{2}}(1-x)T_{n}$ and integrating over the interval $(0,1)$ then by taking account of \eqref{Tn}, an integration by parts leads to
\begin{align}\label{SMKV5}
\re\langle i \beta_{n}^{\frac{1}{2}+\frac{3\gamma}{8}} u_{n}^{2},(1-x)\beta_{n}^{\frac{1}{2}-\frac{3\gamma}{8}+\frac{\gamma}{2}}T_{n}\rangle_{L^{2}(0,1)}+\frac{\beta_{n}^{\frac{\gamma}{2}}}{2}\left(|T_{n}(0^{+})|^{2}-\|T_{n}\|^2_{L^2(0,1)}\right)
\\
+\beta_{n}^{\frac{\gamma}{2}}\re\langle v_{n}^{2},(1-x)T_{n}\rangle_{L^2(0,1)}=o(1).\nonumber
\end{align}
We suppose that $\ds\gamma\geq\frac{4}{3}$. It's clear from \eqref{SMKV4}, \eqref{Tn} and \eqref{un2} that the first, the third and the last terms of \eqref{SMKV5} converge to zero then one gets
\begin{equation}\label{Tn0}
\beta_{n}^{\frac{\gamma}{4}}.|T_{n}(0^{+})|\,\longrightarrow 0.
\end{equation}
Taking into account to \eqref{unx1} then the trace formula gives
\begin{equation}\label{u10}
\beta_{n}^{\frac{\gamma}{2}+1}.|u_{n}^{1}(0^{+})|\,\longrightarrow 0 .
\end{equation}
Substituting \eqref{eq2} into \eqref{eq3} and taking the inner product with $\beta_{n}^{3-\gamma}v_{n}^{1}$ in $L^{2}(0,1)$ then by integrating by parts we have
\begin{equation}\label{SMKV3}
i\beta_{n}^{4}\left\langle u_{n}^{2},v_{n}^{1}\right\rangle_{L^{2}(0,1)}+\beta_{n}^{3}\left\langle T_{n},v_{n}^{1}\right\rangle_{L^{2}(0,1)}+i\beta_{n}^{4}\|v_{n}^{1}\|_{L^{2}(0,1)}^{2}+\beta_{n}^{3}T_{n}(0^{+})\overline{v_{n}^{1}(0^{+})}=o(1).
\end{equation}
Taking $\ds\gamma\geq12$ and using \eqref{SMKV4}, \eqref{Tn}, \eqref{un2} and \eqref{Tn0} we can see that the first, the second and the fourth terms of \eqref{SMKV3} converge to zero, therefore
\begin{equation}\label{v1}
\beta_{n}^{2}.\|v_{n}^{1}\|_{L^{2}(0,1)}\,\longrightarrow 0.
\end{equation}
From \eqref{eq2} and \eqref{v1} it follows
\begin{equation}\label{v2}
\beta_{n}\|v_{n}^{2}\|_{L^2(0,1)}\,\longrightarrow 0.
\end{equation}
Multiplying \eqref{eq4} with $\beta_{n}^{-\gamma}(1-x)\overline{v_{nx}^{1}}$ and integrating over $(0,1)$ then by taking the real part we find 
\begin{align*}
\frac{c}{2}\left(|v_{nx}^{1}(0^{+})|^{2}-\|v_{nx}^{1}\|_{L^{2}(0,1)}^{2}\right)
=\re\left\langle u_{n}^{2},(1-x)v_{nx}^{1}\right\rangle_{L^{2}(0,1)}
\\
-\re\langle i\beta_{n}v_{n}^{2},(1-x)v_{nx}^{1}\rangle_{L^{2}(0,1)}+o(1).
\end{align*}
Using \eqref{SMKV4}, \eqref{un2} and \eqref{v2} leads to 
\begin{equation}\label{SMKV17}
|v_{nx}^{1}(0^{+})|^{2}-\|v_{nx}^{1}\|_{L^{2}(0,1)}^{2}\,\longrightarrow 0.
\end{equation}
We take the inner product of \eqref{eq4} with $\beta_{n}^{-\gamma}xv_{n}^{1}$ in $L^{2}(0,1)$ then we have 
\begin{equation*}
c\left(\int_{0}^{1}x|v_{nx}^{1}(x)|^{2}\,\ud x+\left\langle v_{nx}^{1},v_{n}^{1}\right\rangle_{L^{2}(0,1)}\right)=\left\langle u_{n}^{2},xv_{n}^{1}\right\rangle_{L^{2}(0,1)}-i\beta_{n}\langle v_{n}^{2},xv_{n}^{1}\rangle_{L^{2}(0,1)}+o(1).
\end{equation*}
Using \eqref{SMKV4}, \eqref{un2} and \eqref{v2} we deduce that
\begin{equation*}
\int_{0}^{1}x|v_{nx}^{1}(x)|^{2}\,\ud x\,\longrightarrow 0.
\end{equation*}
This implies in particular that for every $\varepsilon$ in $(0,1)$ we have 
\begin{equation}\label{SMKV6}
\|v_{nx}^{1}\|_{L^{2}(\varepsilon,1)}\,\longrightarrow 0\;\text{ as }n\nearrow+\infty.
\end{equation}
Multiplying \eqref{eq4} with $\beta_{n}^{-\gamma}(1-x)\overline{v_{nx}^{1}}$ and integrating over $(0,\varepsilon)$ then by taking the real part we find 
\begin{align*}
\frac{c}{2}\left(|v_{nx}^{1}(\varepsilon)|^{2}-\|v_{nx}^{1}\|_{L^{2}(\varepsilon,1)}^{2}\right)=\re\left\langle u_{n}^{2},(1-x)v_{nx}^{1}\right\rangle_{L^{2}(\varepsilon,1)}-\re\langle i\beta_{n}v_{n}^{2},(1-x)v_{nx}^{1}\rangle_{L^{2}(\varepsilon,1)}+o(1).
\end{align*}
Besides,  from \eqref{SMKV4}, \eqref{un2}, \eqref{v2} and \eqref{SMKV6} we follow
\begin{equation*}
|v_{nx}^{1}(\varepsilon)|\,\longrightarrow 0\;\text{ as }n\nearrow+\infty.
\end{equation*}
Then we deduce that 
\begin{equation}\label{SMKV18}
v_{nx}^{1}(x)\,\longrightarrow 0\;\text{ a.e. in } [0,1]\text{ as }n\nearrow+\infty.
\end{equation}
Now, \eqref{SMKV4} and \eqref{SMKV18} allows  the use of the dominated convergence theorem and lead to
\begin{equation}\label{SMKV9}
\|v_{nx}^{1}\|_{L^{2}(0,1)}\,\longrightarrow 0.
\end{equation}
Therefore, we obtain
\begin{equation}\label{SMKV10}
|v_{n}^{1}(0^{+})|\,\longrightarrow 0.
\end{equation}
By combining \eqref{SMKV17} and \eqref{SMKV9} we find
\begin{equation}\label{SMKV8}
|v_{nx}^{1}(0^{+})|\,\longrightarrow 0.
\end{equation}
Furthermore, taking the inner product of \eqref{eq2} with $\beta_n^{1-\gamma}(1-x)v_{nx}^1$ and then considering the imaginary part one gets
\begin{align*}
\beta_n^2 Re(v_{nx}^1 , (1-x)v_n^1)-Im\beta_n(v_n^2, (1-x)v_{nx}^1)=o(1)
\\
=\frac{1}{2}( \beta_n^2 \vert v_n^1(0^+) \vert ^2 - \beta_n^2 \Vert v_n^1 \Vert^2) - \beta_n Im \langle v_n^2 , (1-x) v_{nx}^1\rangle
\end{align*}
Adding to this \eqref{SMKV10}, \eqref{v1} and \eqref{v2} we can deduce that :
\begin{equation}\label{ vn01}
\beta _n  \vert v_n^1 (0^+) \vert\longrightarrow 0
\end{equation}
Thanks to \eqref{Tn0}, \eqref{u10}, \eqref{SMKV10} and \eqref{SMKV8} one gets
\begin{align}
\beta_{n}^{\frac{\gamma}{2}+1}.u_{n}^{1}(0^{-})\,\longrightarrow 0,\label{0-1}
\\
\beta^{\frac{\gamma}{4}}.u_{nx}^{1}(0^{-})\,\longrightarrow 0,
\\
\beta_n v_{n}^{1}(0^{-})\,\longrightarrow 0,
\\
v_{nx}^{1}(0^{-})\,\longrightarrow 0.\label{0-2}
\end{align} 
Next, inserting \eqref{eq1} into \eqref{eq3} and inserting \eqref{eq2} into \eqref{eq4} and consider both equations in the interval $(0,1)$, leads to
\begin{equation}\label{SMKV11}
-\beta_{n}^{2}u_{n}^{1}-u_{nxx}^{1}+v_{n}^{2}=\beta_{n}^{-\gamma}f_{n}^{2}+i\beta_{n}^{1-\gamma}f_{n}^{1},
\end{equation}
 and
\begin{equation}\label{SMKV12}
 -\beta_{n}^{2}v_{n}^{1}-cv_{nxx}^{1}-u_{n}^{2}=\beta_{n}^{-\gamma}g_{n}^{2}+i\beta_{n}^{1-\gamma}g_{n}^{1}.
\end{equation}
A straightforward calculation shows that the real part of the inner product of \eqref{SMKV11} with $(x+1).u_{nx}^{1}$ and that the real part of the inner of \eqref{SMKV12} with $(x+1).v_{nx}^{1}$ leads to
\begin{align}\label{-104}
\frac{1}{2}\int_{-1}^{0}\left(|\beta_{n}u_{n}^{1}|^{2}+|u_{nx}^{1}|^{2}\right)\,\ud x=\frac{1}{2}\left(|u_{nx}^{1}(0^{-})|^{2}+\beta_{n}^{2}|u_{n}^{1}(0^{-})|^{2}\right)
\\
-\re\langle v_{n}^{2},(x+1)u_{nx}^{1}\rangle_{L^{2}(-1,0)}+o(1),\nonumber
\end{align}
and
\begin{align}\label{-105}
\frac{1}{2}\int_{-1}^{0}\left(|\beta_{n}v_{n}^{1}|^{2}+c|v_{nx}^{1}|^{2}\right)\,\ud x=\frac{1}{2}\left(c|v_{nx}^{1}(0^{-})|^{2}+\beta_{n}^{2}|v_{n}^{1}(0^{-})|^{2}\right)
\\
+\re\langle u_{n}^{2},(x+1)v_{nx}^{1}\rangle_{L^{2}(-1,0)}+o(1).\nonumber
\end{align}
Where we have used \eqref{SMKV4}-\eqref{eq4}. In another hand, from \eqref{SMKV4}, \eqref{un2}, \eqref{v2} and \eqref{0-1}-\eqref{0-2} we get 
\begin{equation}\label{SMKV13}
\int_{-1}^{0}\left(|\beta_{n}u_{n}^{1}|^{2}+|u_{nx}^{1}|^{2}\right)\,\ud x\,\longrightarrow 0,
\end{equation}
and
\begin{equation}\label{SMKV14}
\int_{-1}^{0}\left(|\beta_{n}v_{n}^{1}|^{2}+c|v_{nx}^{1}|^{2}\right)\,\ud x\,\longrightarrow 0.
\end{equation}
Now by summing \eqref{unx1} \eqref{un2}, \eqref{v1}, \eqref{v2}, \eqref{SMKV13} and \eqref{SMKV14} we can see that
\begin{equation}\label{SMKV15}  
\|Y_{n}\|_{\mathcal{H}}\,\longrightarrow 0.
\end{equation}
This contradicts \eqref{SMKV4} and so \eqref{SMKV16} holds true with $\gamma\geq12$. This completes the proof.
\end{proof}

 \end{document}